\newcommand{\K}{\mathcal{K}}
\newcommand{\F}{\mathcal{F}}
\newcommand{\D}{\mathcal{D}}
\newcommand{\C}{\mathcal{C}}
\newcommand\PG{\mathrm{PG}}
\newcommand\GF{\mathrm{GF}}
\newcommand\Aut{\mathrm{Aut}}
\newcommand\Tr{\mathrm{Tr}}
\newcommand\N{\mathbb{N}}
\author {Frank De Clerck \and Stefaan De Winter\thanks{The second author is Postdoctoral Fellow of the Science Foundation - Flanders} \and Thomas Maes}
\title{Singer $8$-arcs  of Mathon type in $\PG(2,2^7)$}
 \institute{ %
Department of Mathematics,
Ghent University,
Krijgslaan 281--S22, B-9000 Ghent, Belgium.
\email{fdc@cage.ugent.be; sgdwinte@cage.ugent.be; tmmaes@cage.ugent.be}
}
\begin{document}
\maketitle

\begin{abstract}
In \cite{geometricapproach} De Clerck, De Winter and Maes counted the number of non-isomorphic Mathon maximal arcs of degree $8$ in $\PG(2,2^h)$, $h \neq 7$ and prime.  In this article we will show that in $\PG(2,2^7)$ a special class of Mathon maximal arcs of degree $8$ arises which admits a Singer group (i.e. a sharply transitive group) on the $7$ conics of these arcs.  We will give a detailed description of these arcs, and then count the total number of non-isomorphic Mathon maximal arcs of degree 8. Finally we show that the special arcs found in $\PG(2,2^7)$ extend to two infinite families of Mathon arcs of degree $8$ in $\PG(2,2^k)$, $k$ odd and divisible by $7$, while maintaining the nice property of admitting a Singer group. 
 \keywords{maximal arcs \and hyperovals } 
  \subclass{05B25 \and 51E20 \and 51E21}
\end{abstract}

\section{Introduction}
\label{intro}
A $\{k;d\}$-arc $\K$, in a finite projective plane of order $q$, is a non-empty proper subset of $k$ points such that some line of the plane meets $\K$ in $d$ points, but no line meets $\K$ in more than $d$ points.  For given $q$ and $d$, $k$ can never exceed $q(d-1)+d$.  If equality holds $\K$ is called a \textit{maximal arc} of degree $d$, a degree $d$ maximal arc or simply, a maximal $d$-arc.  Equivalently, a maximal arc can be defined as a non-empty, proper subset of points such that every line meets the set in $0$ or $d$ points, for some $d$.  The set of points of an affine subplane of order $d$ of a projective plane of order $d$ is a trivial example of a $\{d^2;d\}$-arc, as well as a single point, being a $\{1;1\}$-arc of the  projective plane.  We will neglect  these trivial examples for the rest of this paper. \par
If $\K$ is a maximal $\{q(d-1)+d;d\}$-arc in a projective plane of order $q$, the set of lines external to $\K$ is a maximal $\{q(q-d+1)/d;q/d\}$-arc in the dual plane called the \textit{dual} of $\K$.  It follows that a necessary condition for the existence of a maximal $\{q(d-1)+d;d\}$-arc in a projective plane of order $q$ is that $d$ divides $q$.  Denniston showed that this necessary condition is sufficient in the Desarguesian projective plane $\PG(2,q)$ of order $q$ when $q$ is even \cite{MR0239991}.  Ball, Blokhuis and Mazzocca showed that no non-trivial maximal arcs exist in a Desarguesian projective plane of odd order \cite{MR1466573}.  \par
In \cite{MR1883870}, Mathon gave a construction method for maximal arcs in Desarguesian projective planes that generalized the previously known construction of Denniston \cite{MR0239991}.  We will begin by describing this construction method of Mathon.  \\

From now on let $q=2^h$ and let $\Tr$ denote the usual absolute trace map from the finite field $\GF(q)$ onto $\GF(2)$.  We represent the points of the Desarguesian projective plane $\PG(2,q)$ as triples $(a,b,c)$ over $\GF(q)$, and the lines as triples $[u,v,w]$ over $\GF(q)$. A point $(a,b,c)$ is incident with a line $\left[u,v,w\right]$ if and only if $au+bv+cw=0$.  For $\alpha, \beta \in \GF(q)$ such that $\Tr(\alpha \beta)=1$, and $\lambda \in \GF(q)^*= \GF(q) \setminus \{0\}$ we define $F_{\alpha,\beta,\lambda}$ to be the conic
\[
F_{\alpha,\beta,\lambda}=\{(x,y,z):\alpha x^2+xy+\beta y^2+\lambda z^2=0\}.
\]  
Let $\F$ be the set of all such conics.  Remark that all the conics in $\F$ have the point $F_0=(0,0,1)$ as their nucleus and that, due to the trace condition, the line $z=0$ is external to all conics.  \par
For given $\lambda \neq \lambda'$, define a composition
\[
F_{\alpha, \beta, \lambda} \oplus F_{\alpha', \beta', \lambda'}=F_{\alpha \oplus \alpha', \beta \oplus \beta', \lambda \oplus \lambda'}
\]
where the operator $\oplus$ is defined  as follows:
\[
\alpha \oplus \alpha'= \frac{\alpha \lambda + \alpha'\lambda'}{\lambda + \lambda'}, \>\> \beta \oplus \beta'= \frac{\beta\lambda + \beta'\lambda'}{\lambda + \lambda'}, \>\> \lambda \oplus \lambda'= \lambda + \lambda'.
\]
The following lemma was proved by Mathon in \cite{MR1883870}.

\begin{lemma}  \label{discon}
Two non-degenerate conics $F_{\alpha,\beta,\lambda}$, $F_{\alpha',\beta',\lambda'}$, $\lambda \neq \lambda'$ and their composition $F_{\alpha,\beta,\lambda} \oplus F_{\alpha',\beta',\lambda'}$ are mutually disjoint if $\Tr((\alpha \oplus \alpha')(\beta \oplus \beta'))=1$.
\end{lemma}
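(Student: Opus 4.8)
The plan is to reduce the three-fold disjointness to a single solvability question over $\GF(q)$. Write $Q=\alpha x^2+xy+\beta y^2+\lambda z^2$ and $Q'=\alpha' x^2+xy+\beta' y^2+\lambda' z^2$ for the quadratic forms of the two given conics, and let $Q''$ be the form of the composition, with $\lambda''=\lambda\oplus\lambda'=\lambda+\lambda'\neq 0$. A one-line comparison of coefficients shows $\lambda Q+\lambda' Q'=\lambda'' Q''$, so in characteristic $2$ the three forms satisfy the linear relation $\lambda Q+\lambda' Q'+\lambda'' Q''=0$. Since $\lambda,\lambda',\lambda''$ are all nonzero, any point lying on two of the three conics automatically lies on the third; hence it suffices to prove that the common intersection $S$ of all three conics is empty.

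Next I would exploit the characteristic. The form $Q+Q'=(\alpha+\alpha')x^2+(\beta+\beta')y^2+(\lambda+\lambda')z^2$ is a perfect square, $Q+Q'=\ell^2$ with $\ell=\sqrt{\alpha+\alpha'}\,x+\sqrt{\beta+\beta'}\,y+\sqrt{\lambda+\lambda'}\,z$. Let $m$ be the line $\ell=0$. Because $\lambda\neq\lambda'$, the common nucleus $F_0=(0,0,1)$ of all conics of $\F$ does not lie on $m$, so every point of $m$ has $(x,y)\neq(0,0)$ and is determined by $(x:y)$ via $z=\bigl(\sqrt{\alpha+\alpha'}\,x+\sqrt{\beta+\beta'}\,y\bigr)/\sqrt{\lambda+\lambda'}$. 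Moreover $S\subseteq\{Q+Q'=0\}=m$, and conversely a point of $F_{\alpha,\beta,\lambda}\cap m$ has $Q=\ell^2=0$, forcing $Q'=0$ too and hence (by the linear relation) $Q''=0$; thus $S=F_{\alpha,\beta,\lambda}\cap m$. Substituting the equation of $m$ into $Q=0$ and clearing the denominator $\lambda+\lambda'$ turns this into the binary condition $\tilde\alpha\,x^2+xy+\tilde\beta\,y^2=0$ with $(x,y)\neq(0,0)$, where $\tilde\alpha=(\alpha\lambda'+\alpha'\lambda)/(\lambda+\lambda')$ and $\tilde\beta=(\beta\lambda'+\beta'\lambda)/(\lambda+\lambda')$.

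The crucial point is then a trace identity linking $\tilde\alpha\tilde\beta$ to the hypothesis. Expanding both sides directly, one gets $\tilde\alpha\tilde\beta+(\alpha\oplus\alpha')(\beta\oplus\beta')=\alpha\beta+\alpha'\beta'$ (the cross terms cancel in characteristic $2$ and the common denominator $(\lambda+\lambda')^2$ equals $\lambda^2+\lambda'^2$). Since $F_{\alpha,\beta,\lambda}$ and $F_{\alpha',\beta',\lambda'}$ lie in $\F$ we have $\Tr(\alpha\beta)=\Tr(\alpha'\beta')=1$, so taking traces yields $\Tr(\tilde\alpha\tilde\beta)=\Tr\bigl((\alpha\oplus\alpha')(\beta\oplus\beta')\bigr)$. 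Under the hypothesis this equals $1$, hence in particular $\tilde\alpha\neq 0$ and $\tilde\beta\neq 0$. A nonzero solution of $\tilde\alpha x^2+xy+\tilde\beta y^2=0$ must then have $y\neq 0$ (otherwise $\tilde\alpha x^2=0$ forces $x=0$), so writing $t=x/y$ and $u=\tilde\alpha t$ it would give $u^2+u=\tilde\alpha\tilde\beta$, which is impossible since $\Tr(\tilde\alpha\tilde\beta)=1$. Therefore $S=\emptyset$ and the three conics are mutually disjoint.

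I expect the only real work to be the two bookkeeping identities — the pencil relation $\lambda Q+\lambda' Q'=\lambda'' Q''$ and, above all, $\tilde\alpha\tilde\beta+(\alpha\oplus\alpha')(\beta\oplus\beta')=\alpha\beta+\alpha'\beta'$ — together with the routine substitution that produces $\tilde\alpha$ and $\tilde\beta$. The conceptual heart, which keeps the computation short, is the observation that in characteristic $2$ the pencil spanned by $Q$ and $Q'$ contains the repeated line $Q+Q'=\ell^2$: this confines the pairwise intersection of the conics to a single line $m$ and reduces everything to the classical fact that $u^2+u=c$ is solvable over $\GF(q)$ exactly when $\Tr(c)=0$.
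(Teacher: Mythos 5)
The paper itself contains no proof of Lemma \ref{discon}: it is quoted from Mathon \cite{MR1883870}, so there is no in-paper argument to compare you against. Your proof is correct and complete. The pencil identity $\lambda Q+\lambda' Q'=(\lambda+\lambda')Q''$ is verified by a direct coefficient check (using $(\lambda+\lambda')^2=\lambda^2+\lambda'^2$), and since $\lambda,\lambda',\lambda+\lambda'$ are all nonzero it does reduce mutual disjointness to emptiness of the triple intersection; confining that intersection to the repeated line $Q+Q'=\ell^{\,2}$, which misses the nucleus $(0,0,1)$ because $\lambda\neq\lambda'$, is also sound. Your bookkeeping identity $\tilde\alpha\tilde\beta+(\alpha\oplus\alpha')(\beta\oplus\beta')=\alpha\beta+\alpha'\beta'$ checks out (the cross terms indeed cancel in characteristic $2$), and the conditions $\Tr(\alpha\beta)=\Tr(\alpha'\beta')=1$ are legitimately available since the notation $F_{\alpha,\beta,\lambda}$ is only defined for conics of $\F$; the Artin--Schreier fact that $u^2+u=c$ is solvable in $\GF(q)$ exactly when $\Tr(c)=0$ then finishes the argument. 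This is essentially Mathon's original route (substitute the ``difference'' of two conic equations back into one of them and invoke the trace criterion); what you add is the explicit linear relation among the three quadratic forms, which collapses the three pairwise intersections into a single one and so avoids repeating the computation for each pair --- a clean and slightly more economical organization of the same computation.
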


 Given some subset $\C$ of $\F$, we say $\C$ is \textit{closed} if for every $F_{\alpha,\beta,\lambda} \neq F_{\alpha',\beta',\lambda'} \in \C$, $F_{\alpha \oplus \alpha',\beta \oplus \beta',\lambda \oplus\lambda'} \in \C$.  We can now state Mathon's theorem.
\begin{theopargself}
\begin{theorem} [\cite{MR1883870}]  \label{mathon}
Let $\C\subset\F$ be a closed set of conics  in $\PG(2,q)$, $q$ even.  Then the union of the points on the conics of $\C$ together with their common nucleus $F_0$ is a degree $\vert \C \vert +1$ maximal arc in $\PG(2,q)$.
\end{theorem}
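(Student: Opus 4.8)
The plan is to verify directly the two defining properties of a degree-$d$ maximal arc, where $d:=|\C|+1$: that $|\K|=q(d-1)+d$, and that every line of $\PG(2,q)$ meets $\K$ in $0$ or $d$ points. First I would record the structural facts about $\C$. Each conic of $\C$ is a non-degenerate conic of $\F$, so it has the common nucleus $F_0=(0,0,1)$ (whence $F_0$ lies on none of them) and has $z=0$ as an external line. If $F_{\alpha,\beta,\lambda}\neq F_{\alpha',\beta',\lambda'}$ lie in $\C$, then their composition lies in $\C\subset\F$, so $\Tr((\alpha\oplus\alpha')(\beta\oplus\beta'))=1$ and Lemma~\ref{discon} gives that these three conics are mutually disjoint; in particular the conics of $\C$ are pairwise disjoint. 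Indexing them by their third parameters $\lambda$—which run over $L\setminus\{0\}$ for the $\GF(2)$-subspace $L$ of $(\GF(q),+)$ they span, with $C_\lambda\oplus C_{\lambda'}=C_{\lambda+\lambda'}$—one gets that $d=|L|$ is a power of $2$ and, since the $d-1$ conics are pairwise disjoint and avoid $F_0$, that $|\K|=(d-1)(q+1)+1=q(d-1)+d$.

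Next I would handle the lines. Each line through $F_0$ is a tangent of every conic of $\C$, hence meets each in a unique point; by disjointness these $d-1$ points are distinct, so the line meets $\K$ in exactly $d$ points, while the line $z=0$ meets $\K$ in $0$ points. Then fix a line $\ell$ not through $F_0$: being disjoint from the common nucleus, $\ell$ is secant or external to each conic of $\C$, so $|\ell\cap\K|=2s(\ell)$ with $s(\ell)$ the number of conics of $\C$ secant to $\ell$, and it suffices to prove $s(\ell)\in\{0,d/2\}$. The crux is a parity statement I would isolate first: for any two distinct conics $C_\lambda,C_{\lambda'}$ of $\C$, an even number of the three conics $C_\lambda,C_{\lambda'},C_{\lambda+\lambda'}$ are secant to $\ell$.

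To prove it, write $Q_\mu$ for the quadratic form $\alpha_\mu x^2+xy+\beta_\mu y^2+\mu z^2$ of $C_\mu$ and pick points $P_0,P_1$ spanning $\ell$. By the characteristic-two secant criterion, $\ell$ is secant to $C_\mu$ if and only if $\Tr\!\big(Q_\mu(P_0)Q_\mu(P_1)/B(P_0,P_1)^2\big)=0$, where $B$ is the bilinear form associated with $Q_\mu$; the decisive point is that for every conic of $\F$ this bilinear form is the \emph{same}, namely $B((x,y,z),(x',y',z'))=xy'+x'y$, and that $B(P_0,P_1)\neq0$ because $F_0\notin\ell$. Hence the parity of the number of conics among $C_\lambda,C_{\lambda'},C_{\lambda+\lambda'}$ external to $\ell$ equals $\Tr\big(B(P_0,P_1)^{-2}\sum_\mu Q_\mu(P_0)Q_\mu(P_1)\big)$, the sum over $\mu\in\{\lambda,\lambda',\lambda+\lambda'\}$. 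Using the identity $(\lambda+\lambda')Q_{\lambda+\lambda'}=\lambda Q_\lambda+\lambda'Q_{\lambda'}$—immediate from the definition of $\oplus$—and simplifying with $(u+v)^2+u^2=v^2$, one computes $\sum_\mu Q_\mu(P_0)Q_\mu(P_1)=Q(P_0)Q(P_1)$ with $Q:=Q_\lambda+Q_{\lambda'}+Q_{\lambda+\lambda'}$. This $Q$ has $xy$-coefficient $1$ and vanishing $z^2$-coefficient, so, writing $\hat\alpha=\alpha_\lambda+\alpha_{\lambda'}+\alpha_{\lambda+\lambda'}$ and $\hat\beta=\beta_\lambda+\beta_{\lambda'}+\beta_{\lambda+\lambda'}$ for its $x^2$- and $y^2$-coefficients, the change-of-basis invariance of the Arf invariant of a binary quadratic form gives $\Tr\big(Q(P_0)Q(P_1)/B(P_0,P_1)^2\big)=\Tr(\hat\alpha\hat\beta)$. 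A short computation with the $\oplus$-formulas, feeding in the three trace conditions $\Tr(\alpha_\mu\beta_\mu)=1$ (which hold for $\mu=\lambda,\lambda',\lambda+\lambda'$ since all three conics lie in $\F$), then yields $\Tr(\hat\alpha\hat\beta)=1$. Thus an odd number of $C_\lambda,C_{\lambda'},C_{\lambda+\lambda'}$ are external to $\ell$, i.e.\ an even number are secant—the parity statement. I expect this last computation to be the only genuinely delicate point; everything else uses just standard facts about conics in even characteristic, Lemma~\ref{discon}, or $\GF(2)$-linear algebra.

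Finally, the parity statement says exactly that the secant indicator $\chi_\ell\colon L\setminus\{0\}\to\GF(2)$ (with $\chi_\ell(\lambda)=1$ iff $\ell$ is secant to $C_\lambda$), extended by $\chi_\ell(0):=0$, is $\GF(2)$-linear on $L$, since $\chi_\ell(\lambda)+\chi_\ell(\lambda')+\chi_\ell(\lambda+\lambda')=0$ for all $\lambda,\lambda'\in L$. Hence $\ker\chi_\ell$ is either all of $L$ (then $s(\ell)=0$) or a hyperplane (then $s(\ell)=|L|-|\ker\chi_\ell|=d/2$), so $|\ell\cap\K|=2s(\ell)\in\{0,d\}$ in every case. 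Together with the lines through $F_0$ and the line $z=0$, this shows every line of $\PG(2,q)$ meets $\K$ in $0$ or $d$ points; combined with $|\K|=q(d-1)+d$, it follows that $\K$ is a maximal arc of degree $d=|\C|+1$.
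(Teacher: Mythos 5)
Your argument is correct; I checked the key steps and they all hold: the characteristic-two criterion that a line $\ell=\langle P_0,P_1\rangle$ missing $F_0$ is secant or external to $C_\mu$ according to $\Tr\bigl(Q_\mu(P_0)Q_\mu(P_1)/B(P_0,P_1)^2\bigr)$ with the common polar form $B$, the relation $(\lambda+\lambda')Q_{\lambda+\lambda'}=\lambda Q_\lambda+\lambda'Q_{\lambda'}$ and the resulting factorization $\sum_\mu Q_\mu(P_0)Q_\mu(P_1)=Q(P_0)Q(P_1)$, and the identity $\hat\alpha\hat\beta=\alpha\beta+\alpha'\beta'+(\alpha\oplus\alpha')(\beta\oplus\beta')$, which together with the three trace conditions gives $\Tr(\hat\alpha\hat\beta)=1$ and hence the parity statement and the $\GF(2)$-linearity of the secant indicator on $L$. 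Be aware, though, that there is no in-paper proof to compare against: Theorem \ref{mathon} is quoted from Mathon \cite{MR1883870}, and your route (secancy parity for the triple $C_\lambda,C_{\lambda'},C_{\lambda\oplus\lambda'}$, then linearity of $\chi_\ell$ on the subspace of $\lambda$-values) is essentially a reconstruction of Mathon's original trace-based argument, with the Arf-invariant phrasing a clean way to handle the change of basis. The only point worth flagging is the implicit assumption that distinct conics of a closed set $\C$ have distinct parameters $\lambda$ (so that compositions are defined and $\C$ is indexed by $L\setminus\{0\}$); this is also implicit in the paper's formulation, and without it the statement itself would not make sense, so it is a harmless convention rather than a gap.
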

\end{theopargself}
We will sometimes call $F_0$ the nucleus of the maximal arc; by \cite{MR1997407} this is well defined. 
Note that a maximal arc of degree $d$ of Mathon type contains Mathon sub-arcs of degree $d'$ for all $d'$ dividing $d$ (see \cite{MR1883870}). Every maximal arc isomorphic to one as constructed above will be called a {\it maximal arc of Mathon type}. As we mentioned above, Mathon's construction is a generalization of a previously known construction of Denniston.  This can be seen as follows.  Choose $\alpha \in \GF(q)$ such that $\Tr(\alpha)=1$.  Let $A$ be a subset of $\GF(q)^{\star}$ such that $A \cup \{0\}$ is closed under addition.  Then the point set of the conics 
\[
\K_A=\{F_{\alpha,1,\lambda}:\lambda \in A\}
\] 
together with the nucleus $F_0=(0,0,1)$ is the set of points of a degree $\vert A \vert +1$ maximal arc in $\PG(2,q)$. Every maximal arc isomorphic to such an arc will be called a {\it maximal arc of Denniston type}.  The conics in $\K_A$ are a subset of the \textit{standard pencil of conics} given by
\[
\{F_{\alpha,1,\lambda}:\lambda \in \GF(q)^*\}.
\]  
This pencil partitions the points of the plane, not on the line $z=0$, and distinct from $(0,0,1)$, into $q-1$ disjoint  conics on the common nucleus $F_0=(0,0,1)$.  The line $z=0$ is often called the \textit{line at infinity} of the pencil and is denoted by $F_{\infty}$.  It has been proved \cite{MR1883870} that all degree 4 maximal Mathon arcs are necessarily of Denniston type.  \par

The following lemma was proved by Aguglia, Giuzzi and Korchmaros.  
\begin{theopargself}
\begin{lemma}[\cite{MR2443285}]  \label{aggiko}
Given any two disjoint conics $C_1$ and $C_2$ on a common nucleus.  Then there is a unique degree 4 maximal arc of Denniston type containing $C_1 \cup C_2$.
\end{lemma}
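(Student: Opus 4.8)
The plan is to normalise $C_1,C_2$ into the shape of the family $\F$, obtain existence from Lemma~\ref{discon} and Theorem~\ref{mathon}, and obtain uniqueness from a counting argument on secant lines. I would first observe that two disjoint conics on a common nucleus in $\PG(2,q)$, $q$ even, always possess a common external line: a short double count (through a point off $C_1$ and distinct from the nucleus, exactly $q/2$ of the lines are external to $C_1$) shows that the number of lines external to both $C_1$ and $C_2$ equals $q(q-3)/4$, which is positive for $q=2^h\ge4$. Applying a collineation, I may then assume the common nucleus is $F_0=(0,0,1)$ and a common external line is $z=0$. Inspecting partial derivatives shows that a conic with nucleus $(0,0,1)$ has equation $\alpha x^2+xy+\beta y^2+\lambda z^2=0$, and externality of $z=0$ forces $\Tr(\alpha\beta)=1$; hence $C_1=F_{\alpha,\beta,\lambda}$ and $C_2=F_{\alpha',\beta',\lambda'}$ lie in $\F$. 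Finally $\lambda\ne\lambda'$, since otherwise the points of $C_1\cap C_2$ would lie on the line $\sqrt{\alpha+\alpha'}\,x+\sqrt{\beta+\beta'}\,y=0$, which passes through $F_0$ and hence meets $C_1$ in exactly one point, impossible unless $C_1=C_2$.

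For existence, I put $\gamma=\alpha\oplus\alpha'$ and $\delta=\beta\oplus\beta'$ (legitimate since $\lambda\ne\lambda'$). Adding the two conic equations and taking a square root in characteristic $2$ gives $C_1\cap C_2=C_1\cap\ell_0$, where $\ell_0$ is the line $\sqrt{\alpha+\alpha'}\,x+\sqrt{\beta+\beta'}\,y+\sqrt{\lambda+\lambda'}\,z=0$ (a genuine line, not through $F_0$). Eliminating $z$ between $\ell_0$ and $C_1$ and using that a binary quadratic form $AX^2+BXY+CY^2$ with $B\ne0$ has a nontrivial zero iff $\Tr(AC/B^2)=0$, one finds
\[
C_1\cap C_2=\emptyset\ \Longleftrightarrow\ \Tr\!\left(\frac{(\alpha\lambda'+\alpha'\lambda)(\beta\lambda'+\beta'\lambda)}{(\lambda+\lambda')^2}\right)=1.
\]
A short expansion, cancelling terms in characteristic $2$, yields the identity
\[
\gamma\delta=\alpha\beta+\alpha'\beta'+\frac{(\alpha\lambda'+\alpha'\lambda)(\beta\lambda'+\beta'\lambda)}{(\lambda+\lambda')^2},
\]
so that, since $\Tr(\alpha\beta)=\Tr(\alpha'\beta')=1$, disjointness of $C_1$ and $C_2$ is equivalent to $\Tr(\gamma\delta)=1$, i.e. to $C_3:=F_{\gamma,\delta,\lambda+\lambda'}\in\F$. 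Hence $C_3$ is a nondegenerate conic; Lemma~\ref{discon} then makes $C_1,C_2,C_3$ mutually disjoint, and a direct check of the parameters (using $\gamma(\lambda+\lambda')=\alpha\lambda+\alpha'\lambda'$, and likewise for $\delta$) shows $C_1\oplus C_3=C_2$ and $C_2\oplus C_3=C_1$, so that $\{C_1,C_2,C_3\}$ is closed. By Theorem~\ref{mathon}, $\K:=C_1\cup C_2\cup C_3\cup\{F_0\}$ is a degree $4$ maximal arc of Mathon type, hence of Denniston type (as recalled above), and it contains $C_1\cup C_2$.

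For uniqueness, let $\K'$ be a degree $4$ maximal arc of Denniston type with $C_1\cup C_2\subseteq\K'$, and write $\K'=\{N'\}\cup D_1\cup D_2\cup D_3$ with the $D_i$ pairwise disjoint conics on the common nucleus $N'$. Since distinct conics meet in at most $4$ points and $q=2^7>12$, the conic $C_1$ (and likewise $C_2$), having $q+1$ points, cannot be spread over the $D_i$ (nor over $\{N'\}$ and the $D_i$) without equalling one of them; so, after relabelling, $C_1=D_1$ and $C_2=D_2$. Consequently $N'$ is the common nucleus of the conics of $\K'$, so $N'=F_0$, and $D_3$ too has nucleus $F_0$. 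Intersecting $\K'$ with the line $z=0$, which misses $C_1$, $C_2$ and $F_0$, forces $|D_3\cap\{z=0\}|\in\{0,4\}$, hence $=0$, so $z=0$ is external to $D_3$ and $D_3\in\F$. Now any line $m$ not through $F_0$ meets each of $C_1,C_2,D_3$ in $0$ or $2$ points and misses $F_0$, so the maximal-arc condition forces $m$ to be secant to an even number of the three conics; equivalently, the set of secants of $D_3$ is the symmetric difference of the sets of secants of $C_1$ and of $C_2$. The same holds for $C_3$ by the existence argument, so $C_3$ and $D_3$ have the same secant lines; but the number of secants of a conic on the nucleus $F_0$ through a point $P\ne F_0$ is $q$ when $P$ lies on the conic and $q/2$ when it does not, and $q\ne q/2$, so the secant set determines the conic. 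This would give $D_3=C_3$, hence $\K'=\K$.

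The step I expect to be the main obstacle is the normalisation together with the two algebraic identities in the existence part: this is what converts the geometric hypothesis that $C_1$ and $C_2$ are disjoint into the field-theoretic statement $C_3\in\F$, which is exactly what is needed to invoke Lemma~\ref{discon} and Theorem~\ref{mathon}. Once that is settled, uniqueness is an essentially combinatorial count on secant lines.
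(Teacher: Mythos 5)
Your proposal is correct, but be aware that this paper contains no proof of Lemma \ref{aggiko} to compare against: the statement is imported from Aguglia, Giuzzi and Korchm\'aros \cite{MR2443285}, whose methods are those of algebraic curves. Your argument is therefore a genuinely different, self-contained route built only from what the paper itself provides: you normalise the two conics into the family $\F$ (after first producing a common external line via the $q(q-3)/4$ count, which checks out), you show that disjointness of $F_{\alpha,\beta,\lambda}$ and $F_{\alpha',\beta',\lambda'}$ is \emph{equivalent} to $\Tr\bigl((\alpha\oplus\alpha')(\beta\oplus\beta')\bigr)=1$ --- in effect a converse of Lemma \ref{discon} for two conics, and your algebraic identity for $\gamma\delta$ is correct --- so that existence follows from Lemma \ref{discon}, Theorem \ref{mathon} and the quoted fact that degree $4$ Mathon arcs are of Denniston type; uniqueness then comes from the even-secant parity argument together with the observation that a conic on the nucleus $F_0$ is recovered from its secant set (since $q\neq q/2$ secants pass through a point according as it is on or off the conic). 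What your route buys is a proof readable entirely inside this paper's toolkit; what the citation buys is brevity and validity for every even $q$.

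Two small repairs. First, your justification of $\lambda\neq\lambda'$ is stated in the wrong direction: knowing that $C_1\cap C_2$ lies on a line only bounds $|C_1\cap C_2|$ by $1$, which does not contradict disjointness. The correct point (the same equation-adding trick you use for $\ell_0$) is that if $\lambda=\lambda'$ and $C_1\neq C_2$ then $C_1\cap C_2=C_1\cap\ell$ with $\ell$ a line through the nucleus, hence a tangent, so $|C_1\cap C_2|=1$, contradicting disjointness. Second, your uniqueness count requires $q+1>1+4+4+4$, i.e.\ $q\geq16$; this covers $q=2^7$ and every plane used in this paper, but not the lemma in full generality (it is silent at $q=8$). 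For the general statement, replace the count by the fact quoted in the introduction from \cite{MR1997407}: the point set of an arc built from a closed set of conics contains no nondegenerate conics besides those of the closed set, which forces $C_1$ and $C_2$ to be two of the three conics of $\K'$ for every admissible $q$.
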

\end{theopargself}
In \cite{geometricapproach} this was generalized to a synthetic version of Mathon's construction. 

\begin{theorem}[Synthetic version of Mathon's theorem]  \label{synthmathon}
Given a degree $d$ maximal arc $M$ of Mathon type, consisting of $d-1$ conics on a common nucleus $n$, and a conic $C_d$ disjoint from $M$ with the same nucleus $n$, then there is a unique degree $2d$ maximal arc of Mathon type $\left<M,C_d\right>$ containing $M \cup C_d$.
\end{theorem}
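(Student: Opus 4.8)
The plan is to realise $\left<M,C_d\right>$ as the Mathon arc of one explicitly described closed set of conics, and then to show by a counting argument that no other degree‑$2d$ Mathon arc can contain $M\cup C_d$. After applying a collineation I would assume $M=\K(\C)\cup\{F_0\}$, where $\C=\{M_1,\dots,M_{d-1}\}\subseteq\F$ is closed with common nucleus $n=F_0$, and moreover that $C_d=F_{\alpha_d,\beta_d,\lambda_d}\in\F$; this last requirement is that $C_d$ miss the line at infinity $F_\infty$ of $M$, which is automatic, since otherwise $F_\infty$ would meet $M\cup C_d$ — and hence any degree‑$2d$ arc containing it — in one or two points. For each $i$ the conics $M_i$ and $C_d$ are disjoint and share the nucleus $n$, so by Lemma~\ref{aggiko} they lie in a unique degree‑$4$ maximal arc; write $M_i\oplus C_d$ for its third conic. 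A line meeting $M_i\oplus C_d$ would meet that degree‑$4$ arc in at most two points and hence be external, so $M_i\oplus C_d\in\F$; in the chosen coordinates it is precisely $F_{\alpha_i\oplus\alpha_d,\,\beta_i\oplus\beta_d,\,\lambda_i\oplus\lambda_d}$ (with $\lambda_i\neq\lambda_d$, since two disjoint conics of $\F$ on a common nucleus have distinct $\lambda$‑parameters). Note that this description of $M_i\oplus C_d$ is intrinsic, independent of the coordinate frame.

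For existence, set $\C'=\C\cup\{C_d\}\cup\{M_i\oplus C_d:1\le i\le d-1\}$. I would first check $|\C'|=2d-1$: using the cancellation law $A\oplus(A\oplus B)=B$ together with closure of $\C$, any coincidence among the listed conics forces $C_d\in\C$, contradicting $C_d\cap M=\emptyset$. The substance of the existence part is that $\C'$ is closed; I would verify this by taking two distinct conics $A,B\in\C'$ and distinguishing the cases according to which of the three blocks of $\C'$ each belongs to. Every case collapses to membership in $\C'$ by invoking closure of $\C$ and the identities $A\oplus(A\oplus B)=B$ and $(A\oplus B)\oplus C=A\oplus(B\oplus C)$ for $\oplus$, the latter valid whenever the three $\lambda$‑parameters are pairwise distinct with nonzero sum — a condition one verifies holds in each case. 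Granting closure, Theorem~\ref{mathon} gives that $\K(\C')\cup\{F_0\}$ is a degree $|\C'|+1=2d$ maximal arc of Mathon type, and it contains $M$ (as $\C\subseteq\C'$) and $C_d$ (as $C_d\in\C'$); this is $\left<M,C_d\right>$.

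For uniqueness, let $N$ be any degree‑$2d$ maximal arc of Mathon type with $M\cup C_d\subseteq N$. Its nucleus is well defined by \cite{MR1997407} and is shared by the Mathon sub‑arc $M$, hence equals $n$; so in a suitable frame $N=\K(\C_N)\cup\{n\}$ with $\C_N\subseteq\F$ closed and $|\C_N|=2d-1$. Now any conic $K$ among $M_1,\dots,M_{d-1},C_d$ has nucleus $n$ and lies inside $N$, so it misses the line at infinity of $N$ and therefore $K\in\F$; and if $K$ were not one of the $2d-1$ conics of $\C_N$, then for every $C\in\C_N$ the set $K\cap C$ would lie on a line — in characteristic two the difference of the two conic equations has cancelling $xy$‑terms and so is a perfect square — hence $|K\cap C|\le 2$, forcing $q+1=|K|\le 2(2d-1)$, which is false in the range relevant here. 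Thus $\C_N\supseteq\{M_1,\dots,M_{d-1},C_d\}$, and being closed $\C_N$ contains every $M_i\oplus C_d$; since $|\C_N|=2d-1$ this yields $\C_N=\C'$ and $N=\left<M,C_d\right>$.

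The main obstacle is the uniqueness step: one must recognise that every degree‑$2d$ Mathon arc containing $M\cup C_d$ is forced to have the conics of $M$ and the conic $C_d$ among the conics of its own defining closed set — only then does closure pin the arc down. The cardinality estimate above is what delivers this, and it is essentially the only place where smallness of the degree relative to $q$ is used. The closure check for $\C'$, while it is the technical bulk of the existence part, is routine once the near‑associativity and cancellation laws for $\oplus$ are recorded.
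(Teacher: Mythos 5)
Your existence argument has a genuine gap, and it sits exactly where the real content of the theorem lies. You coordinatize $M$ as a closed set $\C\subseteq\F$ (so that the line $z=0$ is external to all conics of $\C$ and misses the nucleus) and then assert that $C_d\in\F$ in that same frame is ``automatic''. The justification you give is circular for the existence part — there is as yet no degree-$2d$ arc containing $M\cup C_d$ — and it is also logically flawed even when an ambient arc is available: a line meeting $M\cup C_d$ in two points is not thereby forced to meet the ambient arc in fewer than $2d$ points, since the remaining $d-1$ conics of that arc could each meet the line in two points, giving exactly $2d$. Worse, the claim itself is false. Take $q=2^7$, $a$ with $a^7=a+1$, and let $M$ be the standard-pencil Denniston $4$-arc with conics $x^2+xy+y^2+\lambda z^2=0$, $\lambda\in\{1,a^{-1},a^{6}\}$ (note $\{0,1,a^{-1},a^{6}\}$ is closed under addition). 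The conic $C: a^{16}x^2+xy+y^2+a^{2}z^2=0$ has nucleus $(0,0,1)$ and is disjoint from all three conics of $M$ (the relevant trace values $\Tr(a^{-10})$, $\Tr(a^{-5})$, $\Tr\bigl((1+a+a^2+a^3+a^4)^4\bigr)$ all equal $1$), yet it meets $z=0$ in two points because $\Tr(a^{16})=\Tr(a)=0$. This is no accident: it is precisely the conic with exponent pair $(16,2)$ in the paper's own Singer arc (\ref{Singer1}), which contains $M$ as a Denniston subarc; for a proper Mathon $8$-arc the extending conics need not avoid the line at infinity of the subarc one starts from. Consequently you cannot write $C_d$ as $F_{\alpha_d,\beta_d,\lambda_d}$ in the frame adapted to $M$, the $\oplus$-description of the conics $M_i\oplus C_d$ is unavailable there, and Theorem \ref{mathon} cannot be applied to your set $\C'$. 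What is actually needed — and what constitutes the substance of the theorem — is an argument producing a single frame (a line external to all $2d-1$ conics and not through $n$) in which the whole configuration becomes a closed subset of $\F$, or else a direct synthetic proof that every line meets the union of the $2d-1$ conics plus $n$ in $0$ or $2d$ points. Your closure computation, granted such a frame, is routine, but the frame is the crux, not a formality. (Note also that the present paper does not prove this theorem; it quotes it from \cite{geometricapproach}, whose proof proceeds geometrically rather than by the coordinate reduction you attempt.)

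Two smaller points. First, for $d\geq 8$ the phrase ``the line at infinity of $M$'' is ambiguous (a proper Mathon arc has several lines at infinity); what your argument really uses is the external line of the chosen coordinatization, and that is exactly the object whose compatibility with $C_d$ fails above. Second, in the uniqueness step the inequality $q+1\le 2(2d-1)$ is contradictory only when $q\ge 4d-2$; since $2d$ divides $q$, this covers all cases except the boundary case $q=2d$, which your argument does not exclude and would need separate treatment. The uniqueness argument is otherwise sound — there the ambient arc $N$ genuinely exists, so its external line can be used — but existence, as written, does not go through.
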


In \cite{geometricapproach} this was used to count the number of non-isomorphic maximal arcs of Mathon type of degree $8$ in $\PG(2,2^p)$, with $p$ prime and $p\neq2,3,7$. The fact that our count did not work for $p=7$ suggested that something special might be going on in $\PG(2,2^7)$. In this paper we will see that this is indeed the case, as we will show that this specific plane admits two maximal  $8$-arcs of Mathon type with a particularly interesting automorphism group that do not exist in any of the other planes $\PG(2,2^p)$, $p$ prime.

Let us first have a look at the geometric structure of a maximal $8$-arc of Mathon type; this is based on \cite{MR1940336}. Note that if $\K$ is a maximal arc constructed from a closed set of conics $\C$ on a common nucleus, then the point set of that arc contains no non-degenerate conics apart from those of $\C$ (see \cite{MR1997407}). From Lemma \ref{aggiko} it immediately follows that every Mathon $8$-arc contains exactly $7$ Denniston $4$-arcs, and each two of these $7$ arcs have exactly one conic in common.  In fact, one easily sees that the structure with as point set the conics of $\K$, line set the degree $4$ subarcs of Denniston type, and the natural incidence is isomorphic to $\PG(2,2)$. In accordance with \cite{MR1940336} we define the lines at infinity of $\K$ to be the lines at infinity of each of the pencils determined by the degree $4$ subarcs. If $\K$ is of Denniston type there is a unique line at infinity, otherwise there are exactly $7$ distinct lines at infinity (see Theorem 2.2 of \cite{MR1940336} and the remark preceding it).  The next lemma shows that there always exists an involution stabilizing $\K$ and all of its conics.

\begin{theopargself}
\begin{lemma}[\cite{geometricapproach}]  \label{iota}
Let $\K$ be a maximal $8$ arc of Mathon type that is not of Denniston type. Then the $7$ lines at infinity of $\K$ are concurrent and there exists a unique involution stabilizing $\K$ and all conics contained in $\K$. This involution is an elation with center the point of intersection of the lines at infinity, and axis the line containing the nucleus of $\K$ and the center.
\end{lemma}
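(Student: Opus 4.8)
The plan is to turn the closed-set data into linear algebra over $\GF(2)$ inside $\GF(q)^3$ and to exploit that both conditions we must control — "lies on a given line at infinity" and "is fixed by a given elation" — are $\GF(2)$-linear conditions on the closed set. Writing the seven conics of the closed set $\C$ defining $\K$ as $F_{\alpha_i,\beta_i,\lambda_i}$, $i=1,\dots,7$, and putting $A_i=\alpha_i\lambda_i$, $B_i=\beta_i\lambda_i$, the composition $\oplus$ becomes coordinatewise addition in $(A,B,\lambda)$, so closedness of $\C$ says exactly that $V:=\{(A_i,B_i,\lambda_i):i\}\cup\{(0,0,0)\}$ is a $3$-dimensional $\GF(2)$-subspace of $\GF(q)^3$; moreover $V\cap\{\lambda=0\}=\{(0,0,0)\}$ since distinct conics of $\F$ in a closed set have distinct $\lambda$. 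Next I would identify the line at infinity of the Denniston $4$-subarc carried by $F_{\alpha_i,\beta_i,\lambda_i}$ and $F_{\alpha_j,\beta_j,\lambda_j}$: the pencil they span has exactly one member that is a repeated line, namely $[\sqrt{\alpha_i+\alpha_j},\sqrt{\beta_i+\beta_j},\sqrt{\lambda_i+\lambda_j}]$, and a short check shows this is unchanged if $F_{\alpha_j,\beta_j,\lambda_j}$ is replaced by the third conic of the subarc. So $\K$ has seven explicit lines at infinity $\ell_{ij}$, one for each line of the Fano plane on the seven conics.

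For concurrency I would use that $x\mapsto\sqrt x$ is additive: $[\sqrt P:\sqrt Q:\sqrt R]$ lies on every $\ell_{ij}$ exactly when $\alpha_i P+\beta_i Q+\lambda_i R$ is independent of $i$, say equal to $t$. Multiplying by $\lambda_i$, this reads $A_i P+B_i Q+\lambda_i^2 R+\lambda_i t=0$ for all $i$; since $(A,B,\lambda)\mapsto AP+BQ+\lambda^2 R+\lambda t$ is $\GF(2)$-linear, this amounts to three $\GF(q)$-linear equations in the four unknowns $(P,Q,R,t)$, which therefore have a nonzero solution, and $V\cap\{\lambda=0\}=\{0\}$ forces $(P,Q,R)\neq(0,0,0)$ there. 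Hence the seven lines at infinity pass through a common point. When $\K$ is not of Denniston type these seven lines are pairwise distinct, so their common point $c$ is unique; feeding this back shows $(P,Q,R)$ is determined up to a scalar and then $t$ is forced, so the solution space of the system is exactly one-dimensional over $\GF(q)$. Also $c\neq F_0$, because $F_0=(0,0,1)$ lies on no $\ell_{ij}$ (that would force $\lambda_i=\lambda_j$).

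For the involution I would first get uniqueness. Any involution $\sigma$ of $\PG(2,q)$ stabilizing $\K$ and each of its conics stabilizes each Denniston $4$-subarc, hence the unique repeated-line member of the corresponding pencil, hence each $\ell_{ij}$, hence $c$; it also fixes the common nucleus $F_0$. In even characteristic such a $\sigma$ is an elation (in the planes considered here the exponent is odd, so no involution has a field-automorphism part), and the fixed points of an elation lie on its axis, so the axis of $\sigma$ is the line $m:=\langle F_0,c\rangle$. After a coordinate change by $\mathrm{SL}(2,q)$ acting on the $x,y$-coordinates — which fixes $F_0$ and the line $z=0$ and sends closed sets of $\F$ to closed sets of $\F$, so it preserves the whole configuration — I may assume $m$ is the line $x=0$. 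The elations with axis $x=0$ are the maps $\tau_{a,b}\colon(x,y,z)\mapsto(x,y+ax,z+bx)$, and substituting shows $\tau_{a,b}$ stabilizes $F_{\alpha_i,\beta_i,\lambda_i}$ precisely when $\beta_i a^2+\lambda_i b^2+a=0$; multiplied by $\lambda_i$ this is $\GF(2)$-linear in $(A_i,B_i,\lambda_i)$, so "$\tau_{a,b}$ stabilizes every conic of $\K$" says that $(a^2,b^2,a)$ solves the equations above with $P=0$. Since $c$ now lies on $x=0$ that system is nontrivial, and by the one-dimensionality its solution space is a single $\GF(q)$-line $\langle(Q_0,R_0,t_0)\rangle$ with $Q_0\neq0$; one checks it contains a vector of the form $(a^2,b^2,a)$ with $(a,b)\neq(0,0)$ if and only if $t_0\neq0$, and that such a vector is then unique. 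The resulting $\tau_{a,b}$ is the desired involution, with center $c$ and axis $m$.

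The step needing an idea rather than bookkeeping is excluding $t_0=0$: if $t_0=0$ then $\beta_i/\lambda_i$ is a constant $\rho$, and the elation $(x,y,z)\mapsto(x,y,z+\sqrt\rho\,y)$ turns each conic of $\K$ into one of the form $\{\alpha_i x^2+xy+\lambda_i z^2=0\}$, all of which contain $(0,1,0)$, contradicting that the conics of $\K$ are pairwise disjoint (Lemma~\ref{discon}); hence $t_0\neq0$ automatically and the argument closes. The two structural points used in passing — that such a pencil of conics has a unique repeated-line member, and that the $\mathrm{SL}(2,q)$ coordinate change preserves $\F$ and the closedness of $\C$ — are short computations with the explicit conic equations; if one does not assume the exponent odd, an involution with a Baer fixed structure must be ruled out separately.
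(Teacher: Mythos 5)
Your argument checks out, but note that this paper does not actually prove Lemma \ref{iota}: it imports it from \cite{geometricapproach}, and the treatment there (as reflected in the normalizations used here and in Remark \ref{t-values}, where the involution is simply exhibited by the matrix $E$ after the axis has been normalized to $x=0$) is coordinate- and trace-condition-based. Your route is genuinely different and more structural: you encode closedness as a $3$-dimensional $\GF(2)$-subspace of triples $(\alpha\lambda,\beta\lambda,\lambda)$, identify each line at infinity with the unique double-line member of the linear pencil spanned by two conics of the corresponding Denniston $4$-subarc (both identifications are correct and easy to verify), and then obtain concurrency from a homogeneous $3\times 4$ system over $\GF(q)$; the hypothesis ``not of Denniston type'' enters exactly where it should, via the fact that the seven lines at infinity are distinct, which forces the solution space to be one-dimensional, and this single rank statement then yields both existence and uniqueness of the involution once the axis is normalized to $x=0$. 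The one step that is not pure bookkeeping, excluding $t_0=0$, you handle correctly: if $\beta_i/\lambda_i$ were a constant $\rho$, all seven conics would contain the point $(0,1,\sqrt{\rho})$, contradicting their pairwise disjointness. What your approach buys is a uniform, computation-light proof in which the elation is not guessed but produced by the linear system; what the original computational approach buys is explicit matrices and trace conditions that are then reused in the counting arguments of Sections 2--4.

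Three small points. First, you rely on the fact that a proper Mathon $8$-arc has seven \emph{distinct} lines at infinity; this is the same external input the paper itself invokes (Theorem 2.2 of \cite{MR1940336}), so that is acceptable. Second, your exclusion of involutions with a nontrivial field-automorphism part (and hence of Baer-type fixed structures) requires the exponent $h$ to be odd; the lemma as quoted carries no such hypothesis, but all planes in \cite{geometricapproach} and in this paper have $h$ odd, and you flag the even case explicitly, so this is a caveat rather than a gap. Third, you assert but do not verify that the constructed elation has center $c$; this is a one-line check, since from $a=Q_0/t_0$ and $b=\sqrt{Q_0R_0}/t_0$ the center $(0,a,b)$ of $\tau_{a,b}$ is projectively $(0,\sqrt{Q_0},\sqrt{R_0})=c$ (alternatively, the fixed lines of an elation pass through its center, and $\tau_{a,b}$ fixes at least two distinct lines at infinity, which meet only in $c$).
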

\end{theopargself}

In the specific case of $\PG(2,2^p)$, $p$ prime and $p \neq 2,3,7$ the following holds.

\begin{theopargself}
\begin{lemma}[\cite{geometricapproach}]  \label{C2}
Let $\K$ be a degree $8$ maximal arc of Mathon type that is not of Denniston type in $\PG(2,2^p)$, $p$ prime, $p\neq2,3,7$. Then $\mathrm{Aut}(\K)\cong C_2$.
\end{lemma}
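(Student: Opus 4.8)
The plan is to study the action of $\Aut(\K)$ on the seven conics of $\K$. By Lemma~\ref{iota} the arc $\K$ admits an involutory elation $\iota$ stabilising $\K$ and each of its conics, so $\langle\iota\rangle\cong C_2\le\Aut(\K)$ and only the reverse inclusion is at stake. Since the seven conics of $\K$ are the only non-degenerate conics contained in $\K$ (by~\cite{MR1997407}) and, by Lemma~\ref{aggiko}, the seven Denniston $4$-subarcs of $\K$ are precisely the degree-$4$ arcs $\langle C_i,C_j\rangle$ spanned by pairs of these conics, the group $\Aut(\K)$ permutes the conics and the Denniston subarcs compatibly; as these form a Fano plane $\PG(2,2)$ (its points the conics, its lines the Denniston subarcs), one obtains a homomorphism $\rho\colon\Aut(\K)\to\mathrm{PGL}(3,2)$. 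It then suffices to prove $(\mathrm{i})$ that $\ker\rho=\langle\iota\rangle$ and $(\mathrm{ii})$ that $\rho$ is trivial.

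For $(\mathrm{i})$, let $g\in\ker\rho$ be non-trivial. Then $g$ fixes every conic, hence every Denniston subarc, hence each of the seven lines at infinity of $\K$, which are \emph{distinct} (because $\K$ is not of Denniston type) and pass through the common point $c$ of Lemma~\ref{iota}; since $g$ fixes $c$, the induced transformation of the pencil of lines through $c$ fixes at least seven of its points. This induced transformation is semilinear over $\GF(2^p)$; if its field-automorphism component were non-trivial it would fix at most three points of the pencil — a suitable power has the Frobenius $x\mapsto x^2$ as field part, whose fixed points on the pencil are among the three projective roots of a cubic, here using that $p$ is prime — contradicting the seven fixed points above. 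Hence $g$ acts linearly on the pencil and, fixing at least three of its points, fixes it pointwise. Thus $g$ is a central collineation with centre $c$, and, as it also fixes the nucleus $n\ne c$, it must be an elation with axis $nc$: the homology alternative is impossible, since a homology with centre $c$ would swap the two points in which some conic of $\K$ meets an appropriate secant line through $c$, so its square would fix four collinear points and hence be the identity, which cannot happen for a non-trivial homology in even characteristic. Every elation with centre $c$ and axis $nc$ is an involution, so $g$ is an involution stabilising $\K$ and all its conics; by the uniqueness clause of Lemma~\ref{iota}, $g=\iota$, and therefore $\ker\rho=\langle\iota\rangle$.

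For $(\mathrm{ii})$, set $\bar G=\rho(\Aut(\K))\le\mathrm{PGL}(3,2)$. The kernel of the action of $\Aut(\K)$ on the pencil of lines through $c$ consists of the central collineations with centre $c$ that stabilise $\K$; any such collineation fixes the seven lines at infinity, hence all seven conics, hence lies in $\ker\rho=\langle\iota\rangle$, and $\iota$ is of this kind. So $\bar G$ embeds into $\mathrm{P\Gamma L}(2,2^p)$, stabilising the set of seven lines at infinity and inducing on it the same permutations it induces on the seven lines of $\PG(2,2)$. Now let $\bar g\in\bar G$ be non-trivial; as an element of $\mathrm{PGL}(3,2)\cong\mathrm{PSL}(2,7)$ it has order $d\in\{2,3,4,7\}$ and fixes respectively $3$, $1$, $1$ or $0$ of the seven lines at infinity (the fixed-point counts of the action of $\mathrm{PGL}(3,2)$ on $\PG(2,2)$). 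Since $p$ is prime with $p\ne 7$, the field-automorphism component of $\bar g$ in $\mathrm{P\Gamma L}(2,2^p)$ has order dividing $\gcd(d,p)=1$, so $\bar g$ lies in $\mathrm{PGL}(2,2^p)$. But $\mathrm{PGL}(2,2^p)$ contains no element of order $4$ or $7$ (its element orders divide $2^p-1$ or $2^p+1$ or equal $2$, while $2^p\pm 1$ are odd, $7\nmid 2^p+1$ for every $p$, and $7\mid 2^p-1$ only when $3\mid p$, i.e.\ $p=3$, which is excluded); an element of order $2$ is an elation fixing just one point of $\PG(1,2^p)$, so it cannot fix three of the seven lines at infinity; and, as $p$ is odd, $3\mid 2^p+1$ while $3\nmid 2^p-1$, so an element of order $3$ is elliptic and fixes no point of $\PG(1,2^p)$, hence cannot fix one of the seven lines at infinity. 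Every possibility for $\bar g$ is thus excluded, so $\bar G=1$ and $\Aut(\K)=\ker\rho=\langle\iota\rangle\cong C_2$.

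The crux is part $(\mathrm{i})$ — pinning down $\ker\rho$, and in particular ruling out that a collineation with a non-trivial Frobenius component, or a homology, fixes all seven conics. This, together with the arithmetic of $2^p$ modulo $3$ and $7$ in part $(\mathrm{ii})$, is where the hypotheses \emph{$p$ prime} and \emph{$p\ne 7$} are used essentially: in $\PG(2,2^7)$ a collineation of order $7$ whose field part is the Frobenius map can permute the seven conics in a single cycle, so both the fixed-point bound on the pencil through $c$ and the order arguments in $\mathrm{PGL}(2,2^p)$ break down — precisely the exceptional behaviour producing the Singer $8$-arcs studied in the rest of the paper.
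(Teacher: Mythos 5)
Your argument is correct, but note that it is not the argument the paper relies on: the paper only quotes this lemma from \cite{geometricapproach}, and the sketch it gives (in the Remark following Theorem \ref{8count}) runs through the stabilizer of a Denniston subarc --- $\vert\Aut(\K)_{\D}\vert=2$, so an automorphism fixing a degree $4$ subarc is the involution $\iota$ of Lemma \ref{iota} --- together with the orbit-length-$7$ argument and the fact that the relevant automorphisms cannot lie in $\mathrm{PGL}(3,2^p)$. Your route is a self-contained alternative: you push $\Aut(\K)$ into the Fano plane of conics and Denniston subarcs, identify $\ker\rho$ with $\langle\iota\rangle$ by a fixed-point count on the pencil of lines through the common point $c$ of the seven lines at infinity (using that $p$ prime lets you pass to a power whose field part is the Frobenius, which fixes at most three points of the pencil), and then kill the image by matching the fixed-line counts $3,1,1,0$ of elements of order $2,3,4,7$ of $\mathrm{PGL}(3,2)$ against what elements of those orders can do in $\mathrm{PGL}(2,2^p)$ (no elements of order $4$ or $7$ since $2^p\pm1$ is odd, $7\nmid 2^p+1$ and $7\mid 2^p-1$ only for $3\mid p$; involutions fix one point; order-$3$ elements are fixed-point free since $3\mid 2^p+1$). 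This makes completely explicit where the hypotheses $p$ prime, $p\neq 2,3,7$ enter and why $p=7$ is exceptional, which is exactly the phenomenon the rest of the paper exploits; the cited proof is shorter mainly because it can lean on the stabilizer computation $\vert\Aut(\K)_{\D}\vert=2$ already established in \cite{geometricapproach}.

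One step in part $(\mathrm{i})$ should be stated more carefully: ``its square would fix four collinear points and hence be the identity'' is not a valid principle (an elation fixes a whole line pointwise). What you need is: $g^2$ is again a homology with centre $c$ and axis $\ell$, or the identity; since $g$ really does swap $P$ and $Q$ (it cannot fix both, because the secant passes through $c$ and hence is not the axis, so at most one of $P,Q$ lies on $\ell$), the fixed point $P\notin\{c\}\cup\ell$ of $g^2$ forces $g^2=\mathrm{id}$, and an involutory homology is impossible in characteristic $2$. In fact you can bypass homologies altogether: every tangent line of a conic of $\K$ passes through the common nucleus $n$, so the line $cn$ is tangent to all seven (pairwise disjoint) conics, and $g$, fixing each conic and the line $cn$, fixes the seven tangency points and $n$; since all fixed points of a nontrivial central collineation other than its centre lie on its axis, the axis is $cn$, which contains $c$, so $g$ is an elation, hence an involution in characteristic $2$, hence $\iota$ by Lemma \ref{iota}.
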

\end{theopargself}

Using Theorem \ref{synthmathon}, the above lemma, and a counting argument that will be exploited also in this paper, it is possible to count the number of non-isomorphic degree $8$ arcs of Mathon type in $\PG(2,2^{2h+1}), 2h+1 \neq 7$ and prime.

\begin{theopargself}
\begin{theorem}[\cite{geometricapproach}]  \label{8count}
The number of non-isomorphic degree 8 maximal arcs of Mathon type in $\PG(2,2^{2h+1})$, $2h+1 \neq 7$ and prime, is exactly
\[
\frac{N}{14}(2^{2h-2}-1)((6h+3)N-1),
\]
where $N=\frac{2^{2h}-1}{3(2h+1)}$ is the number of non-isomorphic Denniston arcs of degree $4$ in $\PG(2,2^{2h+1})$.
\end{theorem}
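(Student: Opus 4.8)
The plan is to count the arcs in question up to the action of the stabilizer $G$ of the nucleus $F_0=(0,0,1)$ in $\PGL(3,q)$, where $q=2^{2h+1}$: since the nucleus is a projective invariant of a Mathon arc, these $G$-orbits are exactly the isomorphism classes. Write $\Sc'$ for the set of all non-Denniston degree-$8$ Mathon arcs with nucleus $F_0$. For $\K\in\Sc'$ the involution $\iota_\K$ of Lemma \ref{iota} is an elation fixing $F_0$, so $\iota_\K\in G$, and by Lemma \ref{C2} it generates the full $G$-stabilizer of $\K$; hence every such stabilizer has order $2$, and the number we want equals $2|\Sc'|/|G|$. A direct computation gives $|G|=q^{3}(q-1)^{2}(q+1)(2h+1)$.

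First I would reduce $|\Sc'|$ to a count of conics. Fix a degree-$4$ subarc $M$ of some $\K\in\Sc'$; by Mathon's theorem $M$ is of Denniston type, with nucleus again $F_0$. By Theorem \ref{synthmathon} the Mathon $8$-arcs containing $M$ are exactly the arcs $\langle M,C\rangle$ as $C$ runs over the conics on $F_0$ disjoint from $M$, and $\langle M,C\rangle=\langle M,C'\rangle$ precisely when $C$ and $C'$ both lie among the four conics of that $8$-arc not contained in $M$. Thus $M$ is contained in exactly $\delta/4$ Mathon $8$-arcs, where $\delta$ is the number of conics on $F_0$ disjoint from $M$; among these the Denniston ones correspond to the $q-4$ conics of the pencil of $M$ disjoint from $M$, so there are $(q-4)/4=2^{2h-1}-1$ Denniston and $e:=\delta/4-(2^{2h-1}-1)$ non-Denniston ones. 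Every Mathon $8$-arc carries exactly $7$ Denniston $4$-subarcs, so counting the incidences $(M,\K)$ with $M$ a $4$-subarc of $\K\in\Sc'$ in two ways gives $7|\Sc'|=e\cdot|\D|$, where $\D$ is the set of Denniston $4$-arcs with nucleus $F_0$; here one must also check that $\delta$ (hence $e$) is the same for every $M\in\D$. An elementary enumeration of Denniston $4$-arcs gives $|\D|=\tfrac16 q^{3}(q-1)^{2}(2^{2h}-1)$, which is consistent with the stated value of $N$ through $|\D|=N|G|/\bigl(2(q+1)\bigr)$; equivalently, every Denniston $4$-arc has automorphism group of order $2(q+1)$, a fact one obtains from a short analysis of the isometry group of the associated anisotropic binary quadratic form together with the primality of $2h+1$.

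The heart of the matter is the determination of $\delta$. Writing $M=\{F_{\alpha,\beta,\lambda}:\lambda\in A\}$ and applying Mathon's disjointness criterion (Lemma \ref{discon}), a conic $C=F_{a,b,c}$ is disjoint from $M$ exactly when $c\notin A$ and $\Tr\bigl((\alpha\oplus a)(\beta\oplus b)\bigr)=1$ for each of the three compositions indexed by $\lambda\in A$; one also needs a short separate treatment of the conics with $c\in A$ and of the conics on $F_0$ that meet the line $z=0$. Evaluating these three simultaneous trace conditions by character-sum and counting arguments — and verifying that the count does not depend on the choice of the $2$-dimensional space $A\cup\{0\}$ — should yield
\[
\delta=\frac{q(q-4)(q-7)}{8},\qquad\text{whence}\qquad e=(q+1)(2^{2h-2}-1)(2^{2h-1}-1).
\]

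To finish, combine $|\Sc'|=\tfrac17 e\,|\D|$ with $|\D|/|G|=N/\bigl(2(q+1)\bigr)$:
\[
\frac{2|\Sc'|}{|G|}=\frac{2e\,|\D|}{7|G|}=\frac{e\,N}{7(q+1)}=\frac{N}{7}(2^{2h-2}-1)(2^{2h-1}-1),
\]
and rewrite $2^{2h-1}-1=\tfrac12\bigl(3(2h+1)N-1\bigr)=\tfrac12\bigl((6h+3)N-1\bigr)$, using $3(2h+1)N=2^{2h}-1$, to obtain the asserted expression $\tfrac{N}{14}(2^{2h-2}-1)\bigl((6h+3)N-1\bigr)$. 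The only genuinely delicate step is the evaluation of $\delta$ (and the accompanying check that it is independent of $M$); everything else is orbit counting together with routine enumeration of subspaces and of the relevant subgroup of $\PGL(3,q)$. The hypotheses $2h+1$ prime and $2h+1\neq 3,7$ enter through Lemma \ref{C2} and through the integrality and value of $N$.
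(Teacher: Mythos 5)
Your global scaffolding is correct, and it is organized differently from the paper's own argument. Counting isomorphism classes as orbits of the stabilizer $G$ of the nucleus in $\PGL(3,q)$ (of order $q^3(q-1)^2(q+1)(2h+1)$), using Lemma \ref{C2} to get stabilizers of order exactly $2$, double counting flags $(M,\K)$ via $7\vert\Sc'\vert=e\vert\D\vert$, and the closing algebra are all fine; indeed your claimed $\delta=q(q-4)(q-7)/8$ and $e=(q+1)(2^{2h-2}-1)(2^{2h-1}-1)$ are exactly the values forced by the stated formula, and the rewriting via $(6h+3)N-1=2^{2h}-2$ is correct. By contrast, the proof in \cite{geometricapproach} (whose structure is mirrored in Lemma \ref{NormalCount} of this paper) never computes the global quantity $\delta$: it fixes representatives of the isomorphism classes of Denniston $4$-arcs in the standard pencil, counts the collineations $\theta_{t,\sigma}$ mapping such a representative onto a $4$-arc meeting $\D^1$ in exactly one conic by solving the two trace conditions (\ref{Tr1}) and (\ref{Tr2}) --- two distinct hyperplanes of $V(2h+1,2)$, hence $2^{2h-1}$ values of $t$ per $(w,\sigma)$, coming in pairs --- removes the $(q+1)$-fold redundancy by normalizing the elation center to $(0,1,0)$, and divides by $7$ using the uniqueness-of-isomorphism lemma of \cite{geometricapproach}. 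The factors $(2^{2h-2}-1)$ and $((6h+3)N-1)$ fall out of that count directly.

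The genuine gap is that the heart of the theorem --- the evaluation $\delta=q(q-4)(q-7)/8$, together with the fact that $\delta$ does not depend on the choice (in particular the isomorphism type) of $M$ --- is asserted rather than proved: ``should yield'' carries the entire content, since everything else in your plan is routine orbit and flag counting. Moreover the count is not a direct application of Lemma \ref{discon} as quoted, which is only a sufficient condition for disjointness (you need the ``if and only if'' version from Mathon's paper), and your ``separate treatments'' are uneven in difficulty: conics $F_{a,b,c}$ with $c\in A$ are indeed easily seen to meet $M$ (two distinct conics on the nucleus $F_0$ with the same $z^2$-coefficient always intersect), but conics on $F_0$ meeting the line $z=0$ can a priori be disjoint from $M$, do count towards $\delta$ by Theorem \ref{synthmathon}, and deciding/counting them is again a nontrivial trace computation. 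Likewise $\vert\mathrm{Aut}\vert=2(q+1)$ for every Denniston $4$-arc (equivalently your value of $\vert\D\vert$) is quoted, not established. So: correct framework and correct target numbers, but the one computation that constitutes the substance of the theorem is missing, and carrying it out would essentially reproduce the hyperplane/trace-condition analysis that \cite{geometricapproach} performs directly on the automorphisms $\theta_{t,\sigma}$.
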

\end{theopargself}

\section*{Remark} 

This theorem does not hold for $2h+1=7$ (note that in that case the obtained number is not even an integer), because Lemma \ref{C2} fails for $2h+1=7$.  We discuss below in more detail why this happens. Let $\K$ a degree $8$ maximal arc of Mathon type in $\PG(2,2^7)$, and let $\phi$ be a non-trivial automorphism of $\K$.  If $\phi$ stabilizes one of the degree 4 maximal subarcs of $\K$ it turns out (proof of Corollary \ref{C2} in \cite{geometricapproach}) that $\phi$ must be the unique involution $\iota$ described in Lemma \ref{iota}. Now suppose that $\phi$ does not stabilize any of the Denniston subarcs.  Since 7 is the only possible orbit length of $\phi$ on these subarcs it follows that the order of $\langle \phi \rangle$ has to be a multiple of 7.  Let the order of $\langle \phi \rangle$ be $k7$, with $k \in \N^{\star}$.  In that case $\vert \langle \phi \rangle_{\D} \vert=k$, where $\D$ is any of the $7$ Denniston subarcs of degree $4$.  Furthermore, since $\vert \Aut(\K)_{\D} \vert =2$ ( see \cite{geometricapproach}) we find that $k=2$.  This means that $\vert \Aut(\K) \vert=14$. Hence, in $\PG(2,2^7)$ a proper Mathon arc of degree $8$ could have a full automorphism group of order $2$ or of order $14$.  The latter type of arc is of specific interest, especially because of the subgroup of order $7$ cyclically permuting the conics of the arc.     


The previous remark suggests the existence of two classes of degree $8$ maximal arcs of Mathon type in $\PG(2,2^7)$.  The degree $8$ maximal arcs of Mathon that have a full automorphism group of order $2$   will be referred to as ``\textit{normal $8$-arcs}" in section \ref{8ArcCount}.  \\
Those admitting a full automorphism group of order $14$ will be called ``\textit{Singer $8$-arcs}".  \\  Also, with a little abuse of definition, Denniston arcs of degree $8$ that admit a group acting sharply transitively on their $7$ conics will be called ``\textit{Singer $8$-arcs}" as well in the next two sections.

We will now move to a detailed analysis of Mathon maximal arcs of degree $8$ in $\PG(2,2^7)$, and prove the existence of two classes of Singer $8$-arcs of Mathon type.

\section{Necessary conditions for the existence of a Singer arc}  \label{NecessCond}

Let $\D_1$ be a given  degree $4$ maximal arc of Denniston type in the standard pencil consisting of the conics $C_1,C_w,C_{w+1}$. (Note that every degree $4$ maximal arc of Denniston type is isomorphic to such an arc.) Due to Lemma \ref{aggiko} each conic $C$ disjoint from $\D_1$, and with nucleus $F_0$,  will give rise to a degree $8$ maximal arc of Mathon type (which might be of Denniston type).  \\
Let the additive subgroup $\{0,1,w,w+1\}, w \in \GF(2^7) \setminus \{0,1\}$, be the one associated to the maximal arc $\D_1$.  In other words we assume that the conics $C_1,C_w, C_{w+1}$ contained in $\D_1$ are given by the equation
\[
C_i:x^2+xy+y^2+iz^2=0,
\] 
where $i=1,w,w+1$.

If we now assume that $\K:=\left<\D_1,C\right>$ is a Singer $8$-arc, then necessarily all degree $4$ arcs of Denniston type contained in it have to be isomorphic, as these $7$ Denniston arcs will be cyclically permuted by the Singer group (= the cyclic group of order 7 permuting the conics, and hence the arcs). This explains why we will consider images of $\D_1$ that have exactly one conic in common with $\D_1$.

Consider the automorphism $\theta_{t,\sigma}$ of $\PG(2,2^7)$ given by 
\[
\theta_{t,\sigma}:x \mapsto A x^{\sigma}
\]
with
\begin{equation} \label{automw}
A:=\left(\begin{array}{ccc} 
		\sqrt{w}^{-\sigma}&0&0\\t&\sqrt{w}^{-\sigma}&0\\ \sqrt{\sqrt{w}^{-\sigma}t+t^2}&0&1
	\end{array}\right),
\end{equation} 
$\sigma \in \Aut(\GF(2^7))$ and $t \in \GF(2^7)$.  This automorphism will map $C_w$ onto $C_1$ while $(0,0,1)^{\theta}=(0,0,1)$ and $(0,1,0)^{\theta}=(0,1,0)$. This latter restriction on the automorphism $\theta_{t,\sigma}$ can be made without loss of generality since the $7$ lines at infinity of a Singer $8$-arc of Mathon type have to be concurrent; hence the above restriction simply chooses the line $X=0$ to be the axis of the unique elation stabilizing our $8$-arc under construction. Notice that we could equally well map $C_{w+1}$ onto $C_1$ or consider any other combination of two of the conics $C_1, C_w, C_{w+1}$ since our purpose is to find a Denniston arc of degree $4$ intersecting $\D_1$  in exactly one conic and being isomorphic to $\D_1$.  In \cite{geometricapproach} the authors obtained two trace conditions that are equivalent to this property. It is clear that these trace conditions still have to hold.  They can be written as:
\begin{eqnarray}  \label{Tr1}
\Tr\biggl[\frac{(1+w)t(\sqrt{w}^{-\sigma}+t)}{(w^{-\sigma}+w)}\biggr]=0
\end{eqnarray}
and
\begin{eqnarray}  \label{Tr2}
\Tr\biggl[\frac{wt(\sqrt{w}^{-\sigma}+t)}{(w^{-\sigma}+w+1)}\biggr]=0
\end{eqnarray}

In view of $\vert \Aut(\K)_{\D} \vert =2$ for any degree $4$ Denniston arc contained in $\K$ (see the above Remark  or \cite{geometricapproach}), we see that the only automorphisms mapping $\D_1$ onto $\D_2:=\D_1^{\theta_{t,\sigma}}$, while fixing $(0,1,0)$ and stabilizing $z=0$, are $\theta_{t,\sigma}$ and $\iota\theta_{t,\sigma}$ (where $\iota$ is the involution described in Lemma \ref{iota}). Hence, if we assume $\K$ to be a Singer $8$-arc of Mathon type, these $2$ automorphisms of $\PG(2,2^7)$ should belong to the automorphism group of $\K$. It is now natural to look at the action of powers of $\theta_{t,\sigma}$.

We now specialize to the action of this automorphism on the line $x=0$.

If we need the Singer group to act on the seven conics of the maximal $8$-arc, more specifically, if we want the seven conics to be cyclically permuted, the intersection points of each conic with the line $x=0$ should not only be distinct, but furthermore, if these intersection points are $(0,y_i,1)$, $i=1,\cdots,7$, then $\{0,y_1,\cdots,y_7\}$ should form an additive group of order $8$.  \\

We also remark that, in this case, the field automorphism $\sigma$ cannot be the identity.  This follows from the proof of Corollary 2 in \cite{geometricapproach} where the authors prove that the non-trivial automorphism $\phi$ mentioned above cannot belong to $\mathrm{PGL}(3,2^p)$.  \\

First of all we will calculate these intersection points.  The automorphism $\theta_{t,\sigma}$ acts on the points $(0,y,1), y \in \GF(2^7)$ contained on the line $x=0$ as follows:

\[
\left(\begin{array}{ccc} 
		\sqrt{w}^{-\sigma}&0&0\\t&\sqrt{w}^{-\sigma}&0\\ \sqrt{\sqrt{w}^{-\sigma}t+t^2}&0&1
	\end{array}\right)
\left(\begin{array}{c}
		0\\y\\1
	\end{array}\right)^{\sigma}=
\left(\begin{array}{c}
		0\\ \sqrt{w}^{-\sigma} y^{\sigma} \\ 1
	\end{array}\right),
\]

with $\sigma \in \Aut(\GF(2^7))$ and $t \in \GF(2^7)$.  Notice that the point $(0,\sqrt{w},1)$ is indeed mapped onto the point $(0,1,1)$.  In order to find all the intersection points, i.e. the images of the point $(0,1,1)$ under $\theta_{t,\sigma}$, $\theta_{t,\sigma}^2$, $\theta_{t,\sigma}^3$, ..., $\theta_{t,\sigma}^7$, we only need the element on position $(2,2)$ of the matrices $A$, $A.A^{\sigma}$, $A.A^{\sigma}.A^{\sigma^2}$,..., $A.A^{\sigma}.A^{\sigma^2}.A^{\sigma^3}.A^{\sigma^4}.A^{\sigma^5}.A^{\sigma^6}$.  These are, respectively, 
\[
\sqrt{w}^{-\sigma}, \sqrt{w}^{-(\sigma^2+\sigma)}, \sqrt{w}^{-(\sigma^3+\sigma^2+\sigma)},...,\sqrt{w}^{-(\sigma^6+...+\sigma^2+\sigma)}.   
\]

It follows that the seven intersection points are
\[
(0,1,1),(0,\sqrt{w}^{-\sigma},1),(0, \sqrt{w}^{-(\sigma^2+\sigma)},1),...,(0,\sqrt{w}^{-(\sigma^6+...+\sigma^2+\sigma)},1).
\]
We want to show that the set of elements 
\[
\{0,1,\sqrt{w}^{-\sigma}, \sqrt{w}^{-(\sigma^2+\sigma)}, \sqrt{w}^{-(\sigma^3+\sigma^2+\sigma)},...,\sqrt{w}^{-(\sigma^6+...+\sigma^2+\sigma)}\}
\] 
forms an additive group.  Therefore we will start by proving the following lemma.

\begin{lemma}  \label{sigmatox}
The set $\{0,1,\sqrt{w}^{-\sigma}, \sqrt{w}^{-(\sigma^2+\sigma)}, \sqrt{w}^{-(\sigma^3+\sigma^2+\sigma)},...,\sqrt{w}^{-(\sigma^6+...+\sigma^2+\sigma)}\}$ of elements in $\GF(2^7)$, with $\sigma$ any non-trivial automorphism  of  $\GF(2^7)$, can be written as the set 
\[
\{0,1,x,x^3,x^7,x^{15},x^{31},x^{63}\},
\]
where $x$ is one of the elements $\sqrt{w}^{-(\sigma^i+...+\sigma)}$.
\end{lemma}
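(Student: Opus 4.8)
The plan is to pass to exponents. As $\sigma$ is a non-trivial automorphism of $\GF(2^7)$, it is the $j$-th power of the Frobenius map $t\mapsto t^2$ for some $j\in\{1,\dots,6\}$, so $\sigma\colon t\mapsto t^{2^j}$; since $7$ is prime, $\gcd(j,7)=1$. Put $u:=\sqrt{w}$. Because $w\notin\{0,1\}$, the element $u$ is a non-identity element of the group $\GF(2^7)^*$ of prime order $127$, hence a generator, so powers of $u$ may be compared modulo $127$. With this convention $\sqrt{w}^{-(\sigma^m+\cdots+\sigma)}=u^{-e_m}$, where $e_m\equiv\sum_{k=1}^{m}2^{jk}\equiv\sum_{k=1}^{m}s^k\pmod{127}$ and $s:=2^j$ has multiplicative order $7$ in $(\Z/127)^*$. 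Thus the lemma is equivalent to the statement that $\{0,1\}\cup\{u^{-e_m}:1\le m\le 6\}=\{0,1\}\cup\{x^{2^k-1}:1\le k\le 6\}$ for a suitable $x$ among the $u^{-e_i}$.

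The substantive point is to guess $x$ correctly. Let $j'\in\{1,\dots,6\}$ be the inverse of $j$ modulo $7$, so that $s^{j'}\equiv 2^{jj'}\equiv 2\pmod{127}$, and set $x:=\sqrt{w}^{-(\sigma^{j'}+\cdots+\sigma)}=u^{-e_{j'}}$, which is indeed one of the listed elements. A one-line computation gives $e_{j'}\equiv\sum_{k=1}^{j'}s^k=s\,\dfrac{s^{j'}-1}{s-1}\equiv\dfrac{s}{s-1}\pmod{127}$, where $s-1\in\{1,3,7,15,31,63\}$ is a unit mod $127$.

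It then remains to evaluate $x^{2^k-1}$. Since $2\equiv s^{j'}\pmod{127}$ we get $2^k-1\equiv s^{j'k}-1\equiv s^{m}-1\pmod{127}$, with $m:=j'k\bmod 7$; hence
\[
x^{2^k-1}=u^{-(2^k-1)\frac{s}{s-1}}=u^{-\frac{s(s^{m}-1)}{s-1}}=u^{-\sum_{l=1}^{m}s^l}=u^{-e_m}.
\]
As $k$ runs over $\{1,\dots,6\}$ the index $m=j'k\bmod 7$ runs bijectively over $\{1,\dots,6\}$ (because $\gcd(j',7)=1$), so $\{x^{2^k-1}:1\le k\le 6\}=\{u^{-e_m}:1\le m\le 6\}$, which completes the proof. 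The same bijection incidentally shows the six elements are pairwise distinct and different from $1$, since $u^{-e_m}=u^{-e_{m'}}$ forces $s^m=s^{m'}$, i.e.\ $m=m'$ for $m,m'\in\{1,\dots,6\}$.

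The one place the argument could go astray if handled carelessly is the bookkeeping of two moduli: exponents of the base $u$ must be reduced modulo $127=2^7-1$, whereas exponents of $s=2^j$ (equivalently, powers of $2$) live modulo $7$ because $s^7\equiv 1$. Once that is respected there is no case analysis at all — the single recipe $j'\equiv j^{-1}\pmod 7$, $x=\sqrt{w}^{-(\sigma^{j'}+\cdots+\sigma)}$, works uniformly for every non-trivial $\sigma$ (for instance $j'=1$ when $\sigma\colon t\mapsto t^2$ and $j'=4$ when $\sigma\colon t\mapsto t^4$).
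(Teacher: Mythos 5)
Your proof is correct, and it rests on the same key choice as the paper's: you take $x$ to be the partial sum indexed by the inverse of the Frobenius exponent (your $j'$ is exactly the paper's $l$ with $\sigma^l=2$). The difference lies in the verification. The paper manipulates exponents formally as sums of powers of $\sigma$, using $\sigma^7=1$ and $1+\sigma+\cdots+\sigma^6=0$, and checks case by case (the table of $j_3,j_7,j_{15},j_{31},j_{63}$ for $l=1,\dots,6$) that each $(2^k-1)$-fold multiple of the chosen sum is again a partial sum. You instead move everything into exponent arithmetic modulo $127$, where the partial sums become $e_m\equiv s(s^m-1)/(s-1)$, and the identity $x^{2^k-1}=u^{-e_{j'k\bmod 7}}$ falls out of the geometric-series formula combined with $2\equiv s^{j'}$; the bijection $k\mapsto j'k\bmod 7$ then replaces the paper's observation that each row of its table contains every index exactly once. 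This yields a uniform, case-free argument, and as a by-product you obtain that the eight elements are pairwise distinct (since $127$ is prime, $u=\sqrt{w}\neq 1$ generates $\GF(2^7)^*$), a fact the paper needs afterwards (Lemma \ref{subgroup} assumes distinct elements) but does not prove at this point. Your handling of the two moduli ($127$ for exponents of $u$, $7$ for powers of $s$) and the invertibility of $s-1\in\{1,3,7,15,31,63\}$ modulo the prime $127$ are both correct, so there is no gap.
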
  
\begin{proof}
Let $\sigma = 2^k$, $k=1,...,6$.  Notice that there is exactly one integer $l\not\equiv 0$ (mod $7$) such that 
$\sigma^l=2^{kl}=2$, and that different $k$ yield different $l$. Set $x:=\sqrt{w}^{-(\sigma+...+\sigma^l)}$.

Now consider 

\[x^3=\sqrt{w}^{-3(\sigma+\sigma^2+...+\sigma^l)}.\]

We need to show that 
\begin{equation} \label{pairs(i,j)}
3(\sigma+\sigma^2+...+\sigma^l)=\sigma+\sigma^2+...+\sigma^{j_3}
\end{equation}
for some $j_3$.

If $l=1,2,3$ we see that $j_3=2, 4$ and $6$ respectively, satisfies equation (\ref{pairs(i,j)}).  If $l=4$ we find
\[
\sigma+\sigma^2+\sigma^3+\sigma^4+\sigma^5+\sigma^6+\sigma^7+\sigma^8
\]
on the left hand side of (\ref{pairs(i,j)}).

Now, using $\sigma^7=1$, and  the equality $\sigma^6+\sigma^5+...+\sigma+1=0$,  we see that, if $l=4$, $j_3=1$ is a solution to (\ref{pairs(i,j)}).  In an analogous way we can compute the values of $j_3$ that satisfy equation (\ref{pairs(i,j)}) for $l=5, 6$. In fact, using the same argument,  we see that $j_3 \equiv 2l \mod 7$ is the unique solution $\pmod 7$ to (\ref{pairs(i,j)}). In exactly the same way, we can also show that for each $l$ there are unique solutions (modulo $7$) to the following equations 

\[k(\sigma+\sigma^2+...+\sigma^l)=\sigma+\sigma^2+...+\sigma^{j_k},\]
with $k=7,15,31,63$.

 We get the following table. 

\begin{eqnarray} \label{(l,i,j)}
\begin{array}{|l|l|l|l|l|l|}
\hline
l  &  j_3 & j_7 & j_{15} & j_{31} & j_{63} \\ \hline
1 & 2    &   3     &    4        &      5     &     6     \\ \hline
2 & 4    &    6    &     1       &       3      &     5        \\ \hline
3 & 6    &     2   &      5      &     1       &      4         \\ \hline
4 &  1   &    5    &     2       &     6       &      3          \\ \hline
5 & 3    &    1    &      6      &      4      &      2          \\ \hline
6 & 5    &    4     &      3      &     2      &      1          \\ \hline
\end{array}
\end{eqnarray}

Clearly each row contains every non-trivial power of $\sigma$ exactly ones. This implies that for every $l$, with the given choice of $x$, our set is indeed representable as 
\[
\{0,1,x,x^3,x^7,x^{15},x^{31},x^{63}\}.
\]
\qed
\end{proof}

We remark that the polynomials $1+x+x^7$ and $1+x^3+x^7$ are primitive over $\GF(2^7)$.

\begin{lemma}\label{subgroup}
A set $\{0,1,x,x^3,x^7,x^{15},x^{31},x^{63}\}$ of distinct elements of $\GF(2^7)$ is a subgroup of the additive group of $\GF(2^7)$ if and only if either $1+x=x^7$, or $1+x^3=x^7$.  
\end{lemma}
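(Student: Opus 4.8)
The plan is to describe $S := \{0,1,x,x^3,x^7,x^{15},x^{31},x^{63}\}$ in terms of a single $\GF(2)$-linear map and then read off the subgroup condition from that map's minimal polynomial. Define $\psi : \GF(2^7) \to \GF(2^7)$ by $\psi(v) = x v^2$. Since $v \mapsto v^2$ is additive in characteristic $2$ and $x \neq 0$ (the listed elements being distinct), $\psi$ is an invertible $\GF(2)$-linear transformation of $\GF(2^7)$; moreover $\psi^7 = \id$ because $\psi^7(v) = x^{2^7-1}v^{2^7} = x^{127}v = v$. An easy induction gives $\psi^k(1) = x^{2^k-1}$ for $0 \le k \le 6$, so $\psi$ fixes $0$ and permutes the seven nonzero elements of $S$ in a single $7$-cycle; in particular $\psi$ maps $S$ onto itself. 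I will also use the squarefree factorization $t^7+1 = (t+1)(t^3+t+1)(t^3+t^2+1)$ over $\GF(2)$ into irreducibles of degrees $1,3,3$ (as noted just before the lemma).

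For the ``only if'' direction: if $S$ is an additive subgroup it is a $3$-dimensional $\GF(2)$-subspace, and by the above $\psi$ restricts to a linear automorphism $\psi|_S$ with $(\psi|_S)^7 = \id$ and $\psi|_S \neq \id$ (since $\psi(1) = x \neq 1$). Hence the minimal polynomial $m(t)$ of $\psi|_S$ divides $t^7+1$, has degree at most $\dim S = 3$, and is neither $1$ nor $t+1$; as $t^7+1$ has no quadratic factor and no linear factor besides $t+1$, the displayed factorization forces $m(t) = t^3+t+1$ or $m(t) = t^3+t^2+1$. Applying the identity $m(\psi|_S) = 0$ to the vector $1 \in S$ then gives respectively $x^7+x+1 = 0$ or $x^7+x^3+1 = 0$, i.e. $1+x = x^7$ or $1+x^3 = x^7$.

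For the ``if'' direction, suppose $1+x = x^7$ (the case $1+x^3 = x^7$ is identical, with $t^3+t^2+1$ in place of $t^3+t+1$) and put $p(t) = t^3+t+1$; the hypothesis is exactly $p(\psi)(1) = 0$. Consider the cyclic $\GF(2)[\psi]$-submodule generated by $1$; since $\psi^7 = \id$, it equals the $\GF(2)$-span of $\{1,x,x^3,x^7,x^{15},x^{31},x^{63}\}$. Its annihilator ideal is generated by a monic divisor of $p$, which is nonconstant (as $1 \neq 0$ in $\GF(2^7)$) and is not $t+1$ (else $\psi(1) = 1$, i.e. $x = 1$); since $p$ is irreducible the divisor must be $p$ itself, so the submodule has dimension $\deg p = 3$. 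A $3$-dimensional $\GF(2)$-subspace has exactly $8$ elements, and this one already contains the $8$ distinct elements of $S$, hence it coincides with $S$; so $S$ is a subgroup.

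The real content sits in the first paragraph --- introducing $\psi$ and verifying it is $\GF(2)$-linear of order $7$. After that the ``only if'' part is forced by the shape of the factorization of $t^7+1$, and the only delicate point in the ``if'' part is that the $\psi$-orbit of $1$ already spans the cyclic submodule it generates, so that submodule has at most $8$ elements; this is what makes the two conditions in the statement correspond exactly to the two possible Singer subgroups. I do not anticipate a serious obstacle: a direct computational route --- writing out the pairwise sums $a_i+a_j$ forced by closure and showing they propagate via the recurrence $a_{k+3} = a_k+a_{k+1}$ (resp. $a_{k+3} = a_k+a_{k+2}$) --- would also work, just more laboriously.
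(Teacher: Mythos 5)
Your proof is correct, but it takes a genuinely different route from the paper. The paper argues by hand: for sufficiency it writes out the full Cayley table of the set under the relation $1+x=x^7$ (and notes the case $1+x^3=x^7$, equivalently $1+x=x^{31}$, is analogous), and for necessity it eliminates the three remaining possibilities $1+x=x^3$, $1+x=x^{15}$, $1+x=x^{63}$ by short power computations that each end in a contradiction. You instead package the whole set as the orbit of $1$ under the $\GF(2)$-linear map $\psi(v)=xv^2$, observe $\psi^7=\id$ and $\psi^k(1)=x^{2^k-1}$, and then read both implications off the factorization $t^7+1=(t+1)(t^3+t+1)(t^3+t^2+1)$ over $\GF(2)$: necessity because the minimal polynomial of $\psi$ restricted to a $3$-dimensional invariant subspace must be one of the two irreducible cubics, sufficiency because the cyclic $\GF(2)[\psi]$-module generated by $1$ then has dimension exactly $3$ and already contains the eight given distinct elements. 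All the steps check out (linearity and invertibility of $\psi$, the degree bound from Cayley--Hamilton, the exclusion of $1$ and $t+1$, and the PID/annihilator argument), and your approach has the merit of explaining structurally why exactly two conditions appear --- one per irreducible cubic factor of $t^7+1$ --- and of suggesting how the statement would generalize to other degrees, whereas the paper's computational proof is more elementary and self-contained. One small inaccuracy: the remark in the paper immediately preceding the lemma concerns the primitivity of the degree-seven polynomials $1+x+x^7$ and $1+x^3+x^7$, not the factorization of $t^7+1$ that you invoke; that factorization is standard and trivially verified by expansion, so this is a matter of attribution, not of correctness.
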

\begin{proof}
Setting $1+x=x^7$ we can easily construct the following Cayley table:
\[
\begin{array}{l||l|l|l|l|l|l|l|l}
+ & 0 & 1 & x & x^3 & x^7 & x^{15} & x^{31} & x^{63} \\ \hline \hline
0 & 0 & 1 & x & x^3 & x^7 & x^{15} & x^{31} & x^{63} \\ \hline
1 & 1 & 0 & x^7 & x^{63} & x & x^{31} & x^{15} & x^3 \\ \hline
x & x & x^7 & 0 & x^{15} & 1 & x^3 & x^{63} & x^{31} \\ \hline
x^3 & x^3 & x^{63} & x^{15} & 0 & x^{31} & x & x^7 & 1 \\ \hline
x^7 & x^7 & x & 1 & x^{31} & 0 & x^{63} & x^3 & x^{15} \\ \hline
x^{15} & x^{15} & x^{31} & x^3 & x & x^{63} & 0 & 1 & x^7 \\ \hline
x^{31} & x^{31} & x^{15} & x^{63} & x^7 & x^3 & 1 & 0 & x   \\ \hline
x^{63} & x^{63} & x^3 & x^{31} & 1 & x^{15} & x^7 & x & 0  \\ 
\end{array}
\]
It is clear that all necessary conditions are satisfied.  A similar Cayley table can be constructed in the case $1+x^3=x^7$, which is equivalent to the case $1+x=x^{31}$.  We conclude this proof by showing that the remaining three cases $1+x=x^3$, $1+x=x^{15}$ and $1+x=x^{63}$ do not determine a group under the addition in $\GF(2^7)$.  Suppose that $1+x=x^3$.  Then $x^3+x^7=x^3(1+x^4)=x^3(1+x)^4=x^{15}$ and $x^7+x^{15}=x^7(1+x^8)=x^7(1+x)^8=x^{31}$, which implies that $x^3=x^{31}$, a contradiction.  If $1+x=x^{15}$ then $x^7+x^{15}=x^7(1+x)^8=x^7x^{120}=x^{127}=1$, implying $x=x^7$, a contradiction.  Finally, if $1+x=x^{63}$, we have $x+x^3=x(1+x)^2=xx^{126}=1$ which implies $x^3=x^{63}$, again a contradiction.      
\qed
\end{proof}

In the following Lemma we consider degree $4$ maximal arcs of Denniston type, containing the conic $C_1: x^2+xy+y^2+z^2=0$, in the standard pencil.  We note that every degree $4$ maximal arc of Denniston type is isomorphic to one in the standard pencil containing $C_1$.

\begin{lemma}  \label{CarIsomClass}
The number of conics in the standard pencil of $\PG(2,2^7)$ generating, together with  $C_1$, a degree $4$ Denniston arc of a given isomorphism type  is exactly $42$.
\end{lemma}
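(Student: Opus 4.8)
The plan is to recast the statement as a counting problem about $2$-dimensional $\GF(2)$-subspaces of $\GF(2^7)$ and then finish it with two double counts. First I would observe that, because every conic $C_i=F_{1,1,i}$ of the standard pencil has $\alpha=\beta=1$, the composition rule collapses to $C_i\oplus C_j=F_{1,1,i+j}=C_{i+j}$; hence a subset $\{C_i:i\in S\}$ of the pencil is closed in Mathon's sense exactly when $S\cup\{0\}$ is an additive subgroup, and the disjointness hypothesis of Lemma~\ref{discon} becomes $\Tr(1)=1$, which holds since $7$ is odd. So the degree-$4$ Denniston arcs contained in the standard pencil are precisely the sets $\K_W=\{C_i:i\in W\setminus\{0\}\}$ for $W$ a $2$-dimensional $\GF(2)$-subspace of $\GF(2^7)$, and those containing $C_1$ correspond to the subspaces $W\ni 1$, of which there are $\binom{6}{1}_2=2^6-1=63$. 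Writing $W=\{0,1,w,w+1\}$, the arc $\K_W$ uses exactly the two conics $C_w,C_{w+1}$ of the pencil other than $C_1$; moreover two distinct such subspaces meet only in $\{0,1\}$ (their intersection is a $\GF(2)$-subspace containing $\{0,1\}$, and a proper one has order $2$), so the $126$ conics $C_v$ with $v\in\GF(2^7)\setminus\{0,1\}$ are spread two-to-one over these $63$ arcs. It therefore suffices to prove that each isomorphism type of degree-$4$ Denniston arc is represented by exactly $21$ of the $63$ arcs through $C_1$.

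Next I would bring in the group $\Gamma=\GF(2^7)^*\rtimes\Aut(\GF(2^7))$, of order $127\cdot 7=889$, acting on the set of $2$-dimensional $\GF(2)$-subspaces by scaling and by field automorphisms. Two degree-$4$ Denniston arcs are isomorphic exactly when the corresponding subspaces are $\Gamma$-equivalent (any collineation between the arcs must carry the defining pencil of the one to that of the other, hence normalises the standard-pencil structure and reduces to an element of $\Gamma$), and since every degree-$4$ Denniston arc is isomorphic to one of the $\K_W$ above, the number of $\Gamma$-orbits equals the number $N$ of isomorphism classes, which for $\PG(2,2^7)$ is $N=\tfrac{2^6-1}{3\cdot 7}=3$. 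I would then check that no nontrivial element of $\Gamma$ can fix a $2$-dimensional subspace $W$: every nontrivial element has order $7$ or $127$, and in either case it permutes the three non-zero vectors of $W$ with orbits of equal size, forcing either an orbit of size exceeding $3$ (impossible for an order-$127$ element, whose orbits on $\GF(2^7)^*$ all have size $127$) or a trivial action; in the latter case $w/w^{\sigma}$ would be the same for all $w\in W\setminus\{0\}$, so those vectors would lie in one $\GF(2)$-line because $\mathrm{Fix}(\sigma)=\GF(2)$, a contradiction. Thus $\Gamma$ acts freely; as there are $\binom{7}{2}_2=127\cdot 21=2667=3\cdot 889$ subspaces in all, the three orbits each have size $889$ (consistent with, and re-deriving, $N=3$).

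Finally I would count pairs $(v,W)$ with $v\in\GF(2^7)^*$, $W$ in a fixed orbit $\mathcal O$ (one isomorphism type) and $v\in W$. Summing over $W$ gives $3\,|\mathcal O|=3\cdot 889=2667$, since a $2$-dimensional $\GF(2)$-subspace has three non-zero vectors. Summing over $v$ instead: $\GF(2^7)^*$ acts transitively on itself by multiplication and fixes $\mathcal O$ set-wise, and the incidence relation is $\Gamma$-equivariant, so the number of subspaces of $\mathcal O$ through a given non-zero $v$ is independent of $v$, say $m$; then $127\,m=2667$, whence $m=21$. In particular exactly $21$ subspaces of $\mathcal O$ contain $1$, i.e.\ exactly $21$ of the $63$ degree-$4$ Denniston arcs in the standard pencil through $C_1$ have this isomorphism type; by the two-to-one correspondence of the first step this accounts for $2\cdot 21=42$ conics, as claimed.

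The step I expect to carry the real weight is the identification in the second paragraph of ``isomorphic as Denniston $4$-arcs'' with ``$\Gamma$-equivalent as subspaces'', together with the value $N=3$: one needs that a degree-$4$ Denniston arc determines its defining pencil, so that an isomorphism between two of them is pencil-preserving after conjugation and hence lies in $\Gamma$; this is available from \cite{MR1940336} and \cite{geometricapproach}. Everything else is routine: the remaining things to verify are only that each $C_v$ with $v\neq 0,1$ is disjoint from $C_1$ (their common points would have to lie on the external line $z=0$), so that $\langle C_1,C_v\rangle$ is the well-defined degree-$4$ Denniston arc through both by Lemma~\ref{aggiko}, and hence each conic of the pencil has a well-defined isomorphism type.
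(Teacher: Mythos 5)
Your proof is correct, but it takes a genuinely different route from the paper's. The paper's argument is much shorter: it counts the $63$ Denniston $4$-arcs of the standard pencil through $C_1$ (via the $126$ remaining points of the line $x=0$, two further conics per arc) and then invokes the classification from \cite{geometricapproach} --- exactly three isomorphism classes of Denniston $4$-arcs in $\PG(2,2^7)$, each with automorphism group $C_{q+1}\rtimes C_2$ --- to conclude that the $63$ arcs split evenly, $21$ per class, whence $2\cdot 21=42$ conics per class. You obtain the same $63$ and the same two-to-one correspondence between conics and arcs, but you replace the appeal to equal automorphism groups by a self-contained equidistribution argument: model the pencil arcs by $2$-dimensional $\GF(2)$-subspaces, show that $\Gamma=\GF(2^7)^*\rtimes\Aut(\GF(2^7))$ of order $889$ acts freely on the $2667$ subspaces (your order-$7$/order-$127$ case analysis is sound: an order-$7$ element induces on the three nonzero vectors a permutation whose order divides both $7$ and $6$, hence is trivial, and $\mathrm{Fix}(\sigma)=\GF(2)$ then forces a contradiction; your phrase ``orbits of equal size'' is loose but harmless), so there are three orbits of size $889$, and the final double count gives $21$ subspaces per orbit through the vector $1$. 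What this buys is that the even split among the $63$ arcs --- and indeed the value $N=3$ --- is derived rather than quoted; what it costs is that everything rests on the identification ``isomorphic as arcs $\Leftrightarrow$ $\Gamma$-equivalent as subspaces'', which you rightly flag as the weight-bearing step: one direction needs that the arc determines its pencil (the three conics are intrinsic to the point set by \cite{MR1997407}, and two of them generate the pencil, whose two degenerate members --- a point and a line --- must be fixed, so the induced action on the parameter $\lambda$ is $\lambda\mapsto c\lambda^\sigma$), while the converse needs only the easy realization of $\Gamma$ by the collineations $(x,y,z)\mapsto(x,y,\sqrt{c}^{-1}z)$ and the field automorphisms. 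Since the paper draws the corresponding classification input from the same source \cite{geometricapproach}, the reliance on the literature is comparable, and your argument is a valid, somewhat more explicit alternative.
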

\begin{proof}
Consider the standard pencil.  Consider the conic $C_1: x^2+xy+y^2+z^2=0$.  This is the conic containing the point $(0,1,1)$ on the line $x=0$. Furthermore the point $(0,0,1)$ is the nucleus of $C_1$, and $(0,1,0)$ is the intersection point of the lines $x=0$ and $z=0$.  This means that so far $3$ points on the line $x=0$ are taken.  The other $126$ points on that line are contained in the $126$ conics left in the standard pencil.  Of course, every one of those conics together with $C_1$ gives rise to a unique degree $4$ maximal arc of Denniston type.  Since the third conic in such a $4$-arc is determined, we find that there are $63$ degree $4$ arcs of Denniston type in the pencil.  We also know that there are exactly $3$ isomorphism classes of degree $4$-arcs of Denniston type in $\PG(2,2^7)$, each of which has an automorphism group isomorphic to $C_{q+1}\rtimes C_2$ (see Lemma 4, Remark 1 and Lemma 5 of \cite{geometricapproach}).  It follows that there are $21$ degree $4$-arcs in each class, or equivalently, that there are $42$ conics in the standard pencil generating together with $C_1$ a degree $4$ Denniston arc of a given isomorphism type.    \qed    
\end{proof}

We already proved that the set of elements 
\[
\{0,1,\sqrt{w}^{-\sigma}, \sqrt{w}^{-(\sigma^2+\sigma)}, \sqrt{w}^{-(\sigma^3+\sigma^2+\sigma)},...,\sqrt{w}^{-(\sigma^6+...+\sigma^2+\sigma)}\}
\]
can be written as $A=\{0,1,x,x^3,x^7,x^{15},x^{31},x^{63}\}$, with $x$ a function of $w$.   Moreover we know that $A$ forms a group under the addition in $\GF(2^7)$ if and only if either $1+x=x^7$ or $1+x^3=x^7$.  It is clear that the set of solutions of the equation $1+x=x^7$ and the set of solutions of the equation $1+x^3=x^7$ have to be disjoint, otherwise $x=x^3$, a contradiction.  
This implies that in each of the two cases we have $7$ possible values for $x$. For every given non-trivial field automorphism $\sigma$, each of these values of $x$ yields a unique value of $w$.  In other words, we have $2\times7\times6=84$ possible values of $w$, that is we get $84$ conics which, together with the automorphism $\theta_{t,\sigma}$, possibly give rise to a Singer $8$-arc.  Note that, since $1+x+x^7$ and $1+x^3+x^7$ are not conjugate under any field automorphism the $84$ values of $w$ are indeed distinct. Furthermore note that, in view of Lemma \ref{sigmatox} and Lemma \ref{subgroup}, the $\sigma$ in $\theta_{t,\sigma}$ is uniquely determined once we have chosen a specific value of $w$ out of these $84$.\\
In view of Lemma \ref{CarIsomClass} we also can conclude that these $84$ conics together with $C_1$ determine exactly two isomorphism classes of degree $4$ maximal arcs of Denniston type in the standard pencil. In other words, at most two of the three isomorphism types of degree $4$ maximal arcs of Denniston type in $\mathrm{PG}(2,2^7)$ can possibly be extended to a Singer $8$-arc.  \\

\section{Necessary and sufficient condition}

We start by proving a lemma that provides us with a necessary and sufficient condition in order for $\theta_{t,\sigma}$ to generate a Singer $8$-arc.

\begin{lemma}  \label{NecAndSuf}
Let $\D=\{C_1,C_2,C_3\}$ be a $4$-arc of Denniston type in $\mathrm{PG}(2,2^7)$. Let 
Let $\theta$ be an automorphism of $\mathrm{PG}(2,2^7)$ with the properties that $C_2^\theta=C_1$,  $C_4:=C_1^\theta$ is disjoint from $C_1, C_2$ and $C_3$, and that $C_4$ has the same nucleus as $C_1,C_2$ and $C_3$. If $\D^{\theta^2}$ intersects both $\D$ and $\D^{\theta}$ in a conic, then $\D$ together with $\theta$ generate a Singer 8-arc, and consequently $\theta$ has order divisible by $7$.
\end{lemma}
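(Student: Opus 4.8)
The plan is to identify the degree-$8$ arc ``generated'' by $\D$ and $\theta$ with the arc $\K:=\langle\D,C_1^{\theta}\rangle$ produced by the synthetic version of Mathon's theorem, to show that $\theta$ stabilises $\K$, and then to read $|\Aut(\K)|=14$ off the discussion in the Remark of Section~\ref{intro}. Concretely, set $C_4:=C_1^{\theta}$. By hypothesis $C_4$ is disjoint from the degree-$4$ Mathon arc $\D$ and has the same nucleus, so Theorem~\ref{synthmathon} yields a unique degree-$8$ Mathon arc $\K:=\langle\D,C_4\rangle$ containing $\D\cup C_4$, with $\D$ one of its seven degree-$4$ Denniston subarcs. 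Since a collineation of $\PG(2,2^7)$ sends degree-$4$ Denniston arcs to degree-$4$ Denniston arcs, $\D^{\theta}$ is a degree-$4$ Denniston arc through the two distinct conics $C_2^{\theta}=C_1$ and $C_1^{\theta}=C_4$ of $\K$; by Lemma~\ref{aggiko} it is \emph{the} Denniston $4$-arc on $\{C_1,C_4\}$, hence it coincides with the corresponding Denniston $4$-subarc of $\K$, so $\D^{\theta}\subseteq\K$. In the same way $\D^{\theta^2}$ is a degree-$4$ Denniston arc, it contains $C_2^{\theta^2}=C_1^{\theta}=C_4\in\K$, and by the hypothesis on $\D^{\theta^2}$ it also meets $\D\subseteq\K$ in a conic, which is distinct from $C_4$ because $C_4$ is disjoint from $\D$; a second application of Lemma~\ref{aggiko} then forces $\D^{\theta^2}\subseteq\K$, and in particular $C_1^{\theta^2}\in\K$.

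Next I would prove $\theta\in\Aut(\K)$. Applying the collineation $\theta$ to $\K\supseteq\D\cup C_4$, the image $\K^{\theta}$ is again a degree-$8$ Mathon arc, and it contains $\D^{\theta}\cup\{C_4^{\theta}\}=\D^{\theta}\cup\{C_1^{\theta^2}\}$, where $C_1^{\theta^2}$ is disjoint from $\D^{\theta}$ and shares its nucleus (both facts obtained by transporting the corresponding statements about $C_4$ and $\D$ through $\theta$). By the uniqueness clause of Theorem~\ref{synthmathon}, $\K^{\theta}$ is the unique degree-$8$ Mathon arc containing $\D^{\theta}\cup\{C_1^{\theta^2}\}$; but, by the previous paragraph, $\K$ is also a degree-$8$ Mathon arc containing $\D^{\theta}$ and $C_1^{\theta^2}$, so $\K^{\theta}=\K$.

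Finally, $\theta$ is a non-trivial automorphism of $\K$ because $C_2^{\theta}=C_1\neq C_2$, and $\theta$ is not the involution $\iota$ of Lemma~\ref{iota}, because $\iota$ fixes every conic of $\K$ whereas $\theta$ moves $C_2$. By the argument recalled in the Remark of Section~\ref{intro} (the proof of Lemma~\ref{C2} of \cite{geometricapproach}), a non-trivial automorphism of a degree-$8$ Mathon arc in $\PG(2,2^7)$ that stabilises one of its seven Denniston $4$-subarcs must equal $\iota$; hence $\theta$ fixes none of the seven Denniston subarcs of $\K$. Since $|\Aut(\K)_{\D'}|=2$ for every degree-$4$ Denniston subarc $\D'$ of $\K$ and $\iota\in\Aut(\K)_{\D'}$, the action of $\Aut(\K)$ on the seven subarcs has trivial point stabilisers; it is therefore semiregular, all its orbits have equal length, and — as $\theta$ moves a subarc — that length is $7$. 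Thus $\Aut(\K)$ is transitive on the seven subarcs, $|\Aut(\K)|=14$, the order of $\theta$ is $7$ or $14$, and the unique subgroup $S$ of order $7$ of $\Aut(\K)$ is transitive on the seven subarcs; since a subarc meets $S$-orbits of conics in orbits of length dividing both $3$ and $7$, $S$ fixes no conic, so $S$ acts sharply transitively (i.e.\ as a Singer group) on the seven conics of $\K$. Hence $\K=\langle\D,C_1^{\theta}\rangle$ is a Singer $8$-arc and $7\mid|\theta|$, as claimed.

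\textbf{Main obstacle.} The heart of the proof is the step $\theta\in\Aut(\K)$: the mechanism is to use the uniqueness in the synthetic Mathon theorem \emph{twice} — once to build $\K$ and once to recognise $\K^{\theta}$ as $\K$ — and for this one genuinely needs the hypothesis that $\D^{\theta^2}$ meets $\D$ (and $\D^{\theta}$) in a conic, since that is exactly what places $\D^{\theta^2}$, and hence the conic $C_1^{\theta^2}=C_4^{\theta}$, inside $\K$. The remaining verifications — that the conics invoked in the two uniqueness arguments are genuinely distinct, and the orbit-length dichotomy on the seven subarcs — are routine once the equality $|\Aut(\K)_{\D'}|=2$ from \cite{geometricapproach} is used.
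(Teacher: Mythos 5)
Your construction of $\K=\langle\D,C_1^\theta\rangle$ and the double use of the uniqueness in Theorem \ref{synthmathon} (once to build $\K$, once to identify $\K^\theta$ with $\K$ via $\D^\theta\subseteq\K$ and $C_1^{\theta^2}\in\K$) are correct, and this is a genuinely different route from the paper, which instead chases $\D^{\theta^k}$ through the Fano-plane structure on the seven conics and exhibits the $7$-cycle of $\theta$ explicitly. The gap is in your second half: the facts you quote --- the uniqueness of the involution $\iota$ (Lemma \ref{iota}), ``a non-trivial automorphism stabilising a Denniston $4$-subarc must equal $\iota$'', and $\vert\Aut(\K)_{\D'}\vert=2$ --- are stated, and are only true, for Mathon $8$-arcs that are \emph{not} of Denniston type. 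For a Denniston $8$-arc the stabiliser of a $4$-subarc is much larger than $2$ and there are many involutions stabilising the arc, so your semiregularity argument, the conclusion $\vert\Aut(\K)\vert=14$, and the extraction of the Singer group all collapse. Nothing in the hypotheses of Lemma \ref{NecAndSuf} guarantees that $\langle\D,C_1^\theta\rangle$ is a proper Mathon arc; on the contrary, the paper immediately afterwards produces automorphisms satisfying exactly these hypotheses (the solutions $t=0$ and $t=\sqrt{w}^{-\sigma}$) for which the generated arc \emph{is} of Denniston type (Remark \ref{Denniston}), and the abuse of definition in Section \ref{intro} deliberately includes such arcs among the Singer $8$-arcs. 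So as written you have proved the lemma only under the additional assumption that the generated arc is not of Denniston type.

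Two further points. First, you never use the hypothesis that $\D^{\theta^2}$ meets $\D^{\theta}$ in a conic; the paper uses it to exclude the degenerate possibility $\D^{\theta^2}=\D^{\theta}$ (equivalently, that the conic common to $\D^{\theta^2}$ and $\D$ is $C_1$). In your argument this exclusion would come for free from the automorphism-group facts, but those are precisely the facts unavailable in the Denniston case, so the hypothesis cannot simply be discarded. Second, the paper's proof needs none of the automorphism-group machinery: using closure ($C_1\oplus C_2=C_3$, etc.) it computes $\D^{\theta^2},\dots,\D^{\theta^7}$ in the two admissible cases and reads off that $\theta$ permutes the seven conics in a single cycle, which gives the Singer property and $7\mid\vert\theta\vert$ uniformly, Denniston or not. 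To repair your proof, either handle the Denniston case by such a direct orbit argument or weaken the statement to proper Mathon arcs; as it stands the Denniston case is a genuine hole.
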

\begin{proof}
Set $\D=\{C_1,C_2,C_3\}$, $\D^\theta=\{C_1,C_4,C_5\}$, with $C_2^\theta=C_1$, $C_1^\theta=C_4$ and $C_3^\theta=C_5$. So clearly $C_1\oplus C_2=C_3$ and $C_1\oplus C_4=C_5$. Let $\langle \D,C_4\rangle$ denote the 8-arc generated by $\D$ and $C_4$.
\medskip

As $C_1\in D^\theta$, we see that $C_4=C_1^\theta \in \D^{\theta^2}$. There are two possible cases (recall that $\D^{\theta^2}$ intersects $\D$ in a conic which has to be distinct from $C_1$):
\begin{enumerate}
\item $\D^{\theta^2}=\{C_4,C_2,C_2\oplus C_4=:C_6\}$,
\item $\D^{\theta^2}=\{C_4,C_3,C_3\oplus C_4=:C_7\}$.
\end{enumerate}
We discuss both cases separately. Note that all $\oplus$-additions and related computations are well defined by Lemma \ref{aggiko} and Theorem \ref{synthmathon}.

\begin{enumerate}
\item {\bf The case $\D^{\theta^2}=\{C_4,C_2,C_2\oplus C_4=:C_6\}$.}

From $C_4\in \D^\theta$ it follows that $C_4^\theta \in \D^{\theta^2}$. Clearly $C_4^\theta \neq C_4$. We will show that  $C_4^\theta$ can also not be $C_2$. This would clearly yield an automorphism of order a power of $3$ stabilizing the the $8$-arc 
$\langle \D,C_4 \rangle$. Hence $\theta$ necessarily would stabilize one of the conics in this $8$-arc. But then there has to be a line that is not fixed pointwise containing at least $3$ fix points, and so $\theta\in\mathrm{P\Gamma L}(3,2^7)\setminus\mathrm{PGL}(3,2^7)$. Consequently $7$ divides the order of $\theta$, a contradiction. Hence $C_4^\theta=C_6$. As $C_2^\theta=C_1$ we obtain $\D^{\theta^3}=\{C_1,C_6, C_1\oplus C_6=:C_7\}$.  We need to show that $\D^{\theta^3}\in\langle \D,C_4\rangle$. But this is true since $C_7=C_1\oplus C_6= C_1\oplus C_2 \oplus C_4=C_3\oplus C_4 \in \langle \D,C_4 \rangle$. 


Next we look at $\D^{\theta^4}$. Note that from the previous step it follows that $C_6^\theta=C_7$, and hence that $\D^{\theta^4}=\{C_4,C_7,C_4\oplus C_7\}$. But $C_4\oplus C_7=C_4\oplus C_3\oplus C_4=C_3$. And so $\D^{\theta^4}=\{C_4,C_7,C_3\}\in \langle\D,C_4\rangle$.


Consequently $\D^{\theta^5}=\{C_6,C_3,C_5\}\in \langle \D,C_4\rangle$, $\D^{\theta^6}=\{C_7,C_5,C_2\}\in \langle \D,C_4\rangle $, and $\D^{\theta^7}=\{C_3,C_2,C_1\}=\D$.

It is now also clear that the action of $\theta$ on the conics of $\langle \D,C_4\rangle $ is described by $C_1\rightarrow C_4 \rightarrow C_6 \rightarrow C_7 \rightarrow C_3 \rightarrow C_5 \rightarrow C_2 \rightarrow C_1$. Hence $\D$ and $\theta$ generate a unique Singer 8-arc, and $\theta$ has order divisible by $7$.

\item {\bf The case $\D^{\theta^2}=\{C_4,C_3,C_3\oplus C_4=:C_7\}$.}

First assume that $C_4^\theta=C_7$. But then $C_5^\theta=C_3$, from which $C_3^{\theta^2}=C_3$, yielding a contradiction as in the case $C_4^\theta=C_2$ above. Hence this case cannot occur, and consequently $C_4^\theta=C_3$.

We quickly see that $\D^{\theta^3}=\{C_3,C_5,C_3\oplus C_5=:C_6\}$. Now $C_6=C_3\oplus C_5=C_3\oplus C_1\oplus C_4=C_2\oplus C_4$, and hence $\D^{\theta^3}\in \langle \D,C_4 \rangle$.

From $C_3^\theta=C_5$ and $C_5^\theta=C_7$ it follows that $\D^{\theta^4}=\{C_5,C_7,C_6^\theta=C_5\oplus C_7\}$. But $C_6^\theta=C_5\oplus C_7=C_1\oplus C_4\oplus C_3\oplus C_4=C_2$, and so $\D^{\theta^4}\in \langle \D,C_4\rangle $.

Consequently $\D^{\theta^5}=\{C_7,C_6,C_1\}\in \langle \D,C_4\rangle $, $\D^{\theta^6}=\{C_6,C_2,C_4\}\in \langle \D,C_4\rangle $, and $\D^{\theta^7}=\{C_2,C_1,C_3\}=\D$.

It is now also clear that the action of $\theta$ on the conics of $\langle \D,C_4\rangle$ is described by $C_1 \rightarrow C_4 \rightarrow C_3 \rightarrow C_5 \rightarrow C_7 \rightarrow C_6 \rightarrow C_2 \rightarrow C_1$. Hence $\D$ and $\theta$ generate a unique Singer 8-arc, and $\theta$ has order divisible by $7$.
\end{enumerate}
\qed
\end{proof}

\begin{remark}
{\rm
As mentioned in Section \ref{NecessCond}, if a Mathon arc is to be a Singer $8$-arc, it can be constructed (or at least it is isomorphic to one that can be constructed) from a Denniston $4$-arc $\D$ in the standard pencil containing the conic $C_1: x^2+xy+y^2+z^2=0$ together with an appropriate automorphism $\theta_{t,\sigma}$. Such automorphism clearly would have to satisfy the conditions of Lemma \ref{NecAndSuf}, and so Lemma \ref{NecAndSuf} provides us with necessary and sufficient conditions on $\theta_{t,\sigma}$ in order to give rise to a Singer $8$-arc. This means that theoretically the necessary subgroup-condition analyzed in Lemma \ref{sigmatox} and Lemma \ref{subgroup} would also follow from the above necessary and sufficient condition. However, we believe that first dealing with the subgroup-condition as we did, makes the analysis of the above necessary and sufficient condition easier, and further provides insightful information on the Singer $8$-arcs that will arise.
}
\end{remark}

\begin{remark}  \label{t-values}
{\rm
Let $\theta_{t,\sigma}$ be an automorphism as considered in  (\ref{automw}). Suppose furthermore that $\theta_{t,\sigma}$ gives rise to a Singer $8$-arc. As mentioned in Lemma \ref{iota} 
there is a unique involution $\iota$ stabilizing all conics of the arc. Hence also $\theta'_{t,\sigma}:=\iota  \theta_{t,\sigma}=\theta_{t,\sigma} \iota$ will be an automorphism giving rise to the same Singer $8$-arc as $\theta_{t,\sigma}$.	This involution is easily seen to be induced by 
$$E=\left(\begin{array}{ccc} 
		1&0&0\\1&1&0\\ 0&0&1
	\end{array}\right).$$
And consequently $\theta'_{t,\sigma}:x\mapsto EAx^\sigma$, where
$$EA=\left(\begin{array}{ccc} 
		\sqrt{w}^{-\sigma}&0&0\\t+\sqrt{w}^{-\sigma}&\sqrt{w}^{-\sigma}&0\\ \sqrt{\sqrt{w}^{-\sigma}t+t^2}&0&1
	\end{array}\right).$$
Thus $\theta'_{t,\sigma}=\theta_{t+\sqrt{w}^{-\sigma},\sigma}$.	This implies that the $t$-values corresponding to a given Singer $8$-arc come in pairs, $t$ and $t+\sqrt{w}^{-\sigma}$. 
}
\end{remark}

\begin{remark}  \label{Denniston}
{\rm
In our analysis so far we have nowhere required that the hypothetical Singer $8$-arc would be a proper Mathon arc. Hence, some of the Singer $8$-arcs we will discover in what follows may well be arcs of Denniston type.  It is however, given $\theta_{t,\sigma}$, easy to decide whether an arc is of Denniston or proper Mathon type. To be of Denniston type all conics of the arc should be contained in the standard pencil, and hence all degree $4$ maximal arcs in the considered $8$-arc should have the same line at infinity, namely $z=0$. Consequently this line should be fixed by $\theta_{t,\sigma}$. This happens if and only if  the element on position $(3,1)$ of matrix $A$ is equal to zero, or equivalently 
$\sqrt{w}^{-\sigma}t+t^2=0$. Hence, if and only if $t=0$ or $t=\sqrt{w}^{-\sigma}$. In view of the previous remark, we see that both of these $t$-values will correspond to one and the same Denniston $8$-arc.
}
\end{remark}
 
 We are now ready to start exploiting Lemma \ref{NecAndSuf}.

Assume that the same settings as presented in Section \ref{NecessCond} hold, that is, the additive subgroup $\{0,1,w,w+1\},w \in \GF(2^7) \setminus \{0,1\}$ is the one associated to the maximal arc $\D$.  The conics with equation
\[
x^2+xy+y^2+iz^2=0,
\]
where $i=1,w,w+1$, that are contained in $\D$ are denoted by $C_1, C_2$ and $C_3$ respectively.  Next, let $\theta=\theta_{t,\sigma}$ be an automorphism of $\PG(2,2^7)$ as defined in (\ref{automw}).  

Instead of choosing $w$ to be one of the $84$ values found in Section \ref{NecessCond}, we will instead fix $\sigma=2$. In view of Lemma \ref{sigmatox} we can do so without loss of generality. Once $x$ is known, this will determine $w$ uniquely.

Hence $\theta:p\mapsto Ap^2$, with
$$A=\left(\begin{array}{ccc} 
		w^{-1}&0&0\\t+w^{-1}&w^{-1}&0\\ \sqrt{w^{-1}t+t^2}&0&1
	\end{array}\right).$$

Suppose that $\D^\theta=\{C_1,C_4,C_5\}$, with $C_2^\theta=C_1$, $C_1^\theta=C_4$ and $C_3^\theta=C_5$.  Due to the proof of the previous lemma we need to consider two specific cases which possibly can lead to a Singer $8$-arc.  Either $C_1^{\theta^2}=C_3$ or $C_1^{\theta^2}=C_6$, where $C_6:=C_2 \oplus C_4$.  We will investigate both cases separately.  \\

Let $p=(x,y,1)$ be a general point of the conic $C_1$.    We know that the automorphism $\theta^2$ is determined by the matrix $A.A^2$, and the automorphism $\sigma^2=4$.  Using this we are able to compute the point $p^{\theta^2}$.  This gives us
\begin{equation}  \label{p^theta^2} 
p^{\theta^2}=\left(\begin{array}{c} 
		w^{-3}x^4\\(w^{-2}t+w^{-1}t^2)x^4+w^{-3}x^4\\(w^{-2}\sqrt{w^{-1}t+t^2}+(w^{-1}t+t^2))x^4+1
	\end{array}\right).
\end{equation} 

We start with the case $C_1^{\theta^2}=C_3$.  This means that we want $p^{\theta^2}$ to be contained in $\C_3$.  Expressing this condition yields the following equation:
\begin{eqnarray*}
&&w^{-6}x^8+(w^{-3}x^4)((w^{-2}t+w^{-1}t^2)x^4+w^{-3}x^4)+(w^{-4}t^2+w^{-2}t^4)x^8+w^{-6}y^8  \\
&&+(w+1)((w^{-4}(w^{-1}t+t^2)+w^{-2}t^2+t^4)x^8+1)=0.
\end{eqnarray*}

Using the fact that $x^2+xy+y^2+1=0$ we can simplify the previous equation to
\begin{equation}  \label{equationC3}
((w^{-2}+w+1)t^4+(w^{-4}+w^{-3}+w^{-2}+w^{-1})t^2+w^{-4}t)x^8+w^{-6}+w+1=0.
\end{equation}
As this should hold for any point $p$ on $C_1$, this equation should be identically zero.  This means that both the coefficients of $x^8$ and $x^0$, which are $(w^{-2}+w+1)t^4+(w^{-4}+w^{-3}+w^{-2}+w^{-1})t^2+w^{-4}t$ and $w^{-6}+w+1$ respectively, have to be 0.  First, we have a look at the condition
\[
w^{-6}+w+1=0.
\]
With the notation used in Lemma \ref{sigmatox} and the fact that $\sigma=2$ we find that $l=1$ which implies that $x=w^{-1}$.  
We easily find  that $w^{-6}+w+1=0$ if and only if $x^7+x+1=0$.  In other words, the case where $x^7+x+1=0$ is the only possible case that allows the coefficient of $x^0$ in (\ref{equationC3}) to be 0.  Furthermore 
\[
(w^{-2}+w+1)t^4+(w^{-4}+w^{-3}+w^{-2}+w^{-1})t^2+w^{-4}t=0
\]
has to hold.  This will provide us with four values for $t$ which are $t=0$, $t=w^{-1}$, $t=w^{115}$ and $t=w^{39}$.  From Remark \ref{t-values} and Remark \ref{Denniston} it is clear that the two solutions $t=0$ and $t=w^{-1}$ will lead to a degree $8$ maximal arc of Denniston type.  The two other values for $t$ will extend the degree $4$ maximal arc $\D$ to a unique Singer $8$-arc.  \\

Next, we move on to the second case: $C_1^{\theta^2}=C_6$.  We now aim for $p^{\theta^2}$ to be contained in $\C_6$.  First of all we have to compute $C_4$ which is the image of $C_1$ under $\theta$.  After some calculations we find
\[
C_4:(1+(w+w^{-1})t+(w^2+1)t^2)x^2+xy+y^2+w^{-2}z^2=0.
\]
Since we know that $C_2:x^2+xy+y^2+wz^2=0$ we can now determine the equation of the conic $C_6:=C_2 \oplus C_4$.  We get
\[
C_6:(w+(1+(w+w^{-1})t+(w^2+1)t^2)w^{-2})x^2+(w+w^{-2})xy+(w+w^{-2})y^2+(w^2+w^{-4})z^2=0.
\]

Using (\ref{p^theta^2}) the condition $p^{\theta^2} \in C_6$ can be expressed in the following way:
\begin{eqnarray*}
&&(w+(1+(w+w^{-1})t+(w^2+1)t^2)w^{-2})(w^{-3}x^4)^2  \\  
&&+(w+w^{-2})(w^{-3}x^4)(w^{-2}tx^4+w^{-1}t^2x^4+w^{-3}y^4)  \\
&&+(w+w^{-2})((w^{-4}t^2+w^{-2}t^4)x^8+w^{-6}y^8)  \\
&&+(w^2+w^{-4})(w^{-4}(w^{-1}t+t^2)+w^{-2}t^2+t^4)x^8+1)=0.
\end{eqnarray*}

After some calculation and again using the fact that $x^2+xy+y^2+1=0$ this equation can be simplified to
\begin{equation}  \label{equationC6}
((w^2+w^{-1})t^4+(w^{-2}+1)t^2+(w^{-4}+w^{-3})t)x^8+w^{-8}+w^{-5}+w^{-4}+w^2=0.
\end{equation}

Analogous to the first case this equation should be identically zero.  We start by checking if the coefficient of $x^0$ can be equal to 0 and, since $x=w^{-1}$ with the notation of Lemma \ref{sigmatox}, we see that

\[  w^{-8}+w^{-5}+w^{-4}+w^2=0  
\Leftrightarrow x^{10}+x^7+x^6+1=0.
\]  
 
Now assume that $x^7+x^3+1=0$.  In this case we find that $x^{10}+x^7+x^6+1=x^{10}+x^6+x^3=x^3(x^7+x^3+1)=0$, exactly what we wanted.  On the other hand, suppose that $x^7+x+1=0$ holds.  This would imply that 
\begin{eqnarray*}
x^{10}+x^7+x^6+1&=&x^7(x^3+1)+(x^3+1)^2  \\
&=&(x^3+1)(x^7+x^3+1)  \\
&=&(x^3+1)(x^3+x)  \\
&=&x(x^3+1)(x^2+1).
\end{eqnarray*}

But since $x \neq 0$, $x^3 \neq 1$ and $x^2 \neq 1$ this can never be 0.  In other words the case in which $x^7+x^3+1=0$ is the only possible case that allows the coefficient of $x^0$ to be 0.  Finally we have a look at the identity
\[
(w^2+w^{-1})t^4+(w^{-2}+1)t^2+(w^{-4}+w^{-3})t=0,
\]
assuring that also the coefficient of $x^8$ in (\ref{equationC6}) is 0.  The four solutions satisfying this equation are $t=0$, $t=w^{-1}$, $t=w^{91}$ and $t=w^{8}$.  Analogous to what we have seen above the two solutions $t=0$ and $t=w^{-1}$ yield a degree $8$ maximal arc of Denniston type.  The other two values for $t$ that satisfy this equation will lead to a unique Singer $8$-arc.  \\

We can conclude the previous findings by saying that in both cases $x^7+x+1=0$ and $x^7+x^3+1=0$ the degree $4$ maximal arc $\D$ can uniquely be extended to a Singer $8$-arc.  
\medskip

We end this section by providing actual equations of the two Singer $8$-arcs in $\PG(2,2^7)$.  Let $a$ be a primitive element  of $\GF(2^7)$.  Both the Singer $8$-arcs clearly can be given by the set
\begin{equation}\label{SingerEq}
\{(x,y,z) \in \PG(2,2^7) \vert a^ix^2+xy+y^2+a^jz^2=0\} \cup \{(0,0,1)\},
\end{equation}
where there are seven ordered pairs $(i,j)$, each corresponding with one of the conics of the arc.
There are two cases.
The unique, up to isomorphism, Singer $8$-arc in the case where $1+a+a^7=0$ is the one with exponents 
\begin{equation}\label{Singer1}
(i,j)=(0,0),(0,-1),(0,6),(16,2),(39,14),(93,62),(101,30). 
\end{equation}
In the other case where $a$ satisfies $1+a^3+a^7=0$ the unique, up to isomorphism, Singer $8$-arc is the one with exponents 
\begin{equation}\label{Singer2}
(i,j)=(0,0),(0,-1),(0,30),(18,2),(12,62),(33,6),(43,14).
\end{equation}
These values for $(i,j)$ can be obtained by actually computing the morphism $\theta_{t,\sigma}$ using the above, and then letting act this morphism on $C_1$. This can  easily be done using a computer algebra package such as GAP.

\section{The count}  \label{8ArcCount}

In this section we will count the number of Singer $8$-arcs and the number of normal $8$-arcs in $\PG(2,2^7)$.  Since these are the only two classes of maximal $8$-arcs of Mathon type in $\PG(2,2^7)$ it will lead to the total number of degree $8$ maximal arcs of Mathon type in $\PG(2,2^7)$. This will fill the hole that was left in \cite{geometricapproach}.
In the following lemma the number of Singer $8$-arcs of proper Mathon type in $\PG(2,2^7)$ is counted.

\begin{lemma}  \label{SingerCount}
There are, up to isomorphism, exactly $2$ Singer $8$-arcs in $\PG(2,2^7)$.
\end{lemma}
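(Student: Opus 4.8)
The plan is to combine the two main lines of analysis already developed in the paper. First, from Section \ref{NecessCond} we know that any Singer $8$-arc is isomorphic to one built from a Denniston $4$-arc $\D$ in the standard pencil containing $C_1$, together with an automorphism $\theta_{t,\sigma}$ of the prescribed form, and that the necessary subgroup condition forces $w$ (equivalently $x$) to satisfy one of the two equations $1+x=x^7$ or $1+x^3=x^7$. By Lemma \ref{sigmatox} we may without loss of generality take $\sigma = 2$, which then pins down $w$ uniquely from $x$. So the whole search reduces to the two cases $x^7+x+1=0$ and $x^7+x^3+1=0$, as carried out in Section 3.

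Next I would invoke the computation of that section: in the case $x^7+x+1=0$, applying the necessary and sufficient condition of Lemma \ref{NecAndSuf} (which splits into the subcases $C_1^{\theta^2}=C_3$ and $C_1^{\theta^2}=C_6$) shows that the only admissible nonzero $t$-values producing a \emph{proper} Singer $8$-arc (after discarding $t=0$ and $t=\sqrt{w}^{-\sigma}$, which give Denniston $8$-arcs by Remark \ref{Denniston}) do exist, and by Lemma \ref{NecAndSuf} the arc $\langle\D,C_4\rangle$ they generate is unique; similarly in the case $x^7+x^3+1=0$. This already yields \emph{at least} two Singer $8$-arcs, namely the ones with explicit exponents \eqref{Singer1} and \eqref{Singer2}. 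That these two are genuinely non-isomorphic follows because the primitive polynomials $1+x+x^7$ and $1+x^3+x^7$ are not conjugate under any field automorphism, so the corresponding $84/2$ values of $w$ in the two families are disjoint; equivalently, the underlying Denniston $4$-subarcs of one arc cannot be isomorphic to those of the other (the two families account for two distinct isomorphism classes of Denniston $4$-arcs, as noted at the end of Section \ref{NecessCond}).

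For the upper bound I would argue that no further Singer $8$-arc can arise. Every Singer $8$-arc, up to isomorphism, comes from a pair $(\D, \theta_{t,\sigma})$ with $\D$ in the standard pencil containing $C_1$, and the arc is determined by $\D$ together with $\theta_{t,\sigma}$ via Theorem \ref{synthmathon}. The subgroup condition (Lemmas \ref{sigmatox}, \ref{subgroup}) restricts $w$ to the two families; fixing $\sigma=2$ is harmless by Lemma \ref{sigmatox}. Within each family, the analysis of Section 3 exhausts all valid $t$: in both subcases of Lemma \ref{NecAndSuf} one obtains a quartic in $t$ with exactly the four roots $t=0,\ w^{-1}$, and two others, the first two giving the (single, shared) Denniston $8$-arc and the remaining two giving, by Remark \ref{t-values} and the uniqueness in Lemma \ref{NecAndSuf}, one and the same proper Singer $8$-arc per family. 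Hence each of the two families yields exactly one proper Singer $8$-arc up to isomorphism, and there are exactly $2$ in total.

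The main obstacle is the upper-bound direction: one must be careful that ``fixing $\sigma=2$'' and ``fixing $\D$ in the standard pencil through $C_1$'' really lose no generality, and that the choice of which of $C_1, C_w, C_{w+1}$ is mapped to $C_1$ does not produce additional classes — the paper addresses this by noting that the construction only needs \emph{some} Denniston $4$-arc meeting $\D_1$ in exactly one conic and isomorphic to it, so all such choices are accounted for. One must also confirm that the two arcs exhibited are proper Mathon (not Denniston), which is immediate from Remark \ref{Denniston} since the relevant $t$-values are neither $0$ nor $\sqrt{w}^{-\sigma}$, so the line $z=0$ is not fixed by $\theta_{t,\sigma}$ and the seven lines at infinity are genuinely distinct. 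Everything else is the routine field arithmetic over $\GF(2^7)$ already summarized in Section 3 and verifiable by computer.
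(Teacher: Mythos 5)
Your proposal is correct and follows essentially the same route as the paper: the upper bound comes from the Section \ref{NecessCond} conclusion (via Lemma \ref{CarIsomClass} and the subgroup condition) that at most two of the three isomorphism classes of Denniston $4$-arcs can be extended, and the lower bound plus uniqueness per class comes from the Section 3 computation of the admissible $t$-values with Remark \ref{t-values}, Remark \ref{Denniston} and Lemma \ref{NecAndSuf}. The paper's own proof is just a terse citation of those two facts, so your write-up is the same argument with the details made explicit.
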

\begin{proof}
At the end of section \ref{NecessCond} we concluded that at most two out of the three isomorphism types of degree $4$ maximal arcs of Denniston type in $\PG(2,2^7)$ could possibly be extended to a Singer $8$-arc.  In the previous section it became clear that both of these isomorphism classes induce a unique Singer $8$-arc.    \qed
\end{proof}

\begin{lemma}  \label{NormalCount}
The number of non-isomorphic normal $8$-arcs in $\PG(2,2^7)$ is $199$.
\end{lemma}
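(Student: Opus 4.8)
The plan is to mimic the counting argument that produces Theorem \ref{8count} in \cite{geometricapproach}, but now correcting for the two ``defective'' isomorphism types of Denniston $4$-arcs that give rise to Singer $8$-arcs. The starting point is the synthetic construction: every degree $8$ maximal arc of Mathon type arises as $\langle \D, C\rangle$ for some degree $4$ Denniston arc $\D$ in the standard pencil containing $C_1$ and some conic $C$ on the nucleus $F_0$ disjoint from $\D$, and by Theorem \ref{synthmathon} the arc is determined by this data. First I would count ordered pairs $(\D, C)$ of this shape. There are $63$ Denniston $4$-arcs in the standard pencil through $C_1$ (as in the proof of Lemma \ref{CarIsomClass}), and for each such $\D$ the number of conics on $F_0$ disjoint from $\D$ equals the number of conics in $\F$ (equivalently, the number of points off the $3\cdot (q+1)$ points of $\D$ and off $F_0$ and off the line $z=0$, organised into conics) --- this is exactly the quantity that was computed in \cite{geometricapproach}; call it $M$. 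So there are $63M$ pointed pairs, each giving an $8$-arc, but with multiplicity: each $8$-arc $\K$ of proper Mathon type is counted once for every ordered pair $(\D,C)$ with $\D$ in the standard pencil through $C_1$, $C_1 \in \D$, and $\langle \D,C\rangle = \K$.

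The heart of the count is bookkeeping this multiplicity, and this is where the two types split. For a \emph{normal} $8$-arc $\K$ (full automorphism group $C_2$, generated by $\iota$) the number of representing pairs is governed by $|\Aut(\K)| = 2$ together with the number of ways to place one of the seven conics of $\K$ in the rôle of ``$C_1$'' inside the standard pencil and to choose a Denniston subarc through it: arguing exactly as in \cite{geometricapproach}, each of the $7$ conics of $\K$ can be moved to $C_1$, each lies in $3$ of the $7$ Denniston subarcs, and the stabiliser contributions are accounted for by dividing by $|\Aut(\K)|/|\Aut(\K)_\D|$ and by the number $|\PGL(3,2^7)_{\text{setup}}|$ of projectivities fixing the standard pencil set-up --- all of which is identical to the generic computation and yields the same ``generic'' per-arc multiplicity $m$ that underlies Theorem \ref{8count}. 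For a \emph{Singer} $8$-arc the multiplicity is different because $|\Aut(\K)| = 14$, so such an arc is over-counted by a factor $7$ relative to a normal arc. Hence, writing $S$ for the number of pointed pairs $(\D,C)$ that produce Singer $8$-arcs, the number of non-isomorphic normal $8$-arcs is $(63M - S)/m$ for the appropriate generic multiplicity $m$.

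Next I would pin down $S$ explicitly using the analysis of Sections \ref{NecessCond} and the ``Necessary and sufficient condition'' section. We showed there that exactly two of the three isomorphism types of Denniston $4$-arcs extend to a Singer $8$-arc, that within the standard pencil each isomorphism type is realised by $21$ arcs (Lemma \ref{CarIsomClass}), and that for each such $\D$ precisely two values of $t$ (the pair $t = w^{115}, w^{39}$ in one case, $t = w^{91}, w^{8}$ in the other), i.e.\ two disjoint conics $C$, extend $\D$ to its unique Singer $8$-arc --- while the two ``Denniston'' values $t=0, w^{-1}$ give a Denniston $8$-arc rather than a proper Mathon one and must be excluded. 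Combining: $42$ Denniston $4$-arcs in the standard pencil through $C_1$ extend to a Singer arc, each via $2$ choices of $C$, so $S = 42 \cdot 2 = 84$ pointed pairs (one then checks these $84$ pairs indeed collapse onto the $2$ Singer arcs of Lemma \ref{SingerCount}, consistent with the multiplicity $84/2 = 42$). Plugging $M$ (the explicit conic count from \cite{geometricapproach} specialised to $h=7$, $q=2^7$), $S = 84$, and the generic multiplicity $m$ into $(63M - S)/m$ should give $199$.

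The main obstacle is getting the generic multiplicity constant $m$ exactly right: one must faithfully reproduce the normalisation from \cite{geometricapproach} --- how many projectivities stabilise the chosen set-up (standard pencil, the conic $C_1$, the point $(0,1,0)$, the line $z=0$), how the $7$-fold choice of which conic plays the rôle of $C_1$ interacts with the $3$-fold choice of a Denniston subarc through it and with the automorphism group, and the compensating division by $|\Aut(\K)|$. Any off-by-a-factor there propagates directly into the final number, so this step needs to be carried out by literally re-deriving the count of Theorem \ref{8count} with the Singer pairs subtracted, rather than by analogy; once that arithmetic is set up correctly, the substitution of $S=84$ and the known value of $M$ is routine and yields $199$.
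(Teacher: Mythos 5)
Your proposal is a plan rather than a proof: the two quantities on which everything hinges --- the number $M$ of admissible conics per Denniston $4$-arc and the ``generic multiplicity'' $m$ --- are never computed, and you explicitly defer exactly the step that produces the number $199$ (``should give $199$'', ``needs to be carried out by literally re-deriving the count''). The paper's proof consists precisely of that arithmetic, and it is organised differently from your $(63M-S)/m$ scheme: it fixes representatives $\D^1,\D^2,\D^3$ of the three isomorphism classes, normalises the position of the hypothetical $8$-arc (elation axis $x=0$, lines at infinity through $(0,1,0)$), and counts the automorphisms $\theta_{t,\sigma}$ mapping some $\D^i$ to a $4$-arc meeting $\D^1$ exactly in $C_1$: the two trace conditions (\ref{Tr1}), (\ref{Tr2}) cut out two distinct hyperplanes of $\V(7,2)$, giving $2^5$ values of $t$ per $(w,\sigma)$, which pair up into $2^4$ arcs of which one is Denniston; treating $i\neq 1$ and $i=1$ separately (in the latter case excluding $\sigma=\id$ when $C_1\mapsto C_1$) gives $2\cdot630+600=1860$ conics per $\D^1$, hence $5580$ in total, then $5580-8=5572$, $5572/4=1393$, and finally $1393/7=199$, the last division resting on the cited fact that there is a \emph{unique} isomorphism mapping a Denniston $4$-subarc of a normal $8$-arc onto $\D^i$. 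None of these normalisation facts (distinctness of the hyperplanes, the pairing of $t$-values, the $\sigma=\id$ exclusion, the division by $4$ and by $7$) is established or even identified in your outline, and they are exactly what determines the final constant.

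Beyond incompleteness, the one number you do commit to is wrong. You claim each of the $42$ relevant Denniston $4$-arcs extends to its Singer $8$-arc via ``two disjoint conics $C$'', identifying the two $t$-values ($w^{115},w^{39}$, resp.\ $w^{91},w^{8}$) with two conics. By Remark \ref{t-values} these two values are the pair $t$ and $t+\sqrt{w}^{-\sigma}$, i.e.\ they give $\theta$ and $\iota\theta$, and since $\iota$ stabilises every conic of the arc they produce the \emph{same} conic $C_1^{\theta}$. On the other hand, a Singer $8$-arc contains four conics disjoint from any of its Denniston $4$-subarcs, and each of them generates the arc with that subarc; so in a pair-count the Singer correction per such subarc is $4$, not $2$ (this is consistent with the paper's ``$8$ conics'' for its two fixed representatives: $2$ Singer arcs times $4$ conics). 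With $S$ off by a factor and $m$, $M$ unspecified, the formula $(63M-S)/m$ cannot be said to yield $199$.
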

\begin{proof}
This proof is quite analogous to the proof of Theorem $4$ in \cite{geometricapproach}.  Let $\D^1, \D^2$ and $\D^3$ be chosen fixed and representative of each of the three isomorphism classes of degree $4$ maximal arcs of Denniston type in the standard pencil.  Assume $\D^i$ consists of the conics $C_1, C_2^i$ and $C_3^i$, $i=1,2,3$.  We will start by counting how many degree $8$ maximal arcs of Mathon type contain one of the three degree $4$ maximal arcs, say $\D^1$, have the line $x=0$ as elation axis and $(0,1,0)$ as the intersection point of their lines at infinity. This is done as follows. Every conic disjoint from $\D^1$ and having the same nucleus as $\D^1$ will extend $\D^1$ in a unique way to a maximal arc of Mathon type (which may be Denniston). On the other hand, every Mathon arc of degree $8$ that contains $\D^1$ will give rise to four such conics.  Now each such conic, together with $C_1$, generates a unique maximal arc of dgree $4$ (of Denniston type) which has to be isomorphic to one of $\D^i$, $i=1,2,3$. Hence it will be sufficient to count in how many ways we can map $\D^i$, $i=1,2,3$, on an isomorphic degree $4$ arc which intersects $\D^1$ exactly in $C_1$ plus the nucleus, and which line at infinity contains $(0,1,0)$. \\

\begin{itemize}
\item Assume $i \neq 1$.  \\
Clearly we need an automorphism $\theta$ such that $(\D^i)^\theta$ satisfies the properties described above. Hence $\theta$ has to map one of $C_1$ or $C_j^i$, $j=2,3$ onto $C_1$. It follows that $\theta$ is of the form (\ref{automw}), with $w$ the value corresponding to $C_1$ or $C_j^i$ respectively. 
The above conditions will be satisfied if and only if  the trace conditions (\ref{Tr1}) and (\ref{Tr2}) seen above, are satisfied.  These two conditions can be written as 

\begin{eqnarray*}
\left\{
	\begin{array}{l} 
		\Tr[A_1(w,\sigma)t]=0\\ \Tr[A_2(w,\sigma)t]=0,
	\end{array}
\right.
\end{eqnarray*}
where $A_1$ and $A_2$ are both functions of $w$ and $\sigma$.  We actually obtain two linear equations that correspond to two hyperplanes in the vector space $V(7,2)$.  In \cite{geometricapproach} it was shown that these hyperplanes are distinct. This means that for every $w$ and every field automorphism $\sigma$ there are $2^5$ solutions for $t$.  As noticed in  \cite{geometricapproach}, and as seen in the previous section, these $t$-values always come in pairs.  This implies that, for every $w$ and $\sigma$, there are $2^4$ degree $4$ maximal arcs.  One of them gives rise to a degree $8$ maximal arc of Denniston type, and so there are
\[
3\cdot7\cdot(2^4-1)
\] 
automorphisms $\theta$ that satisfy the needed conditions and induce a degree $8$ maximal arc of proper Mathon type.  Of course, one such automorphism leads to two conics disjoint from $\D^1$ and so we get
\[
2\cdot3\cdot7\cdot(2^4-1)=630
\] 
conics that extend $\D^1$ to a degree $8$ maximal arc of Mathon type.  \\
\item Now assume $i=1$.  \\
In the cases where $C_2^1$ is mapped onto $C_1$ and $C_3^1$ is mapped onto $C_1$ we also find
\[
2\cdot7.(2^4-1)
\]
conics to extend $\D^1$.  If we now consider the case where $C_1$ is fixed however, we have to make sure that $\sigma$ is not the identity since this would result in conics which are not disjoint (see Remark 2. in \cite{geometricapproach}).  And so when $i=1$ we find
\[
2\cdot2\cdot7\cdot(2^4-1)+2\cdot6\cdot(2^4-1)=600
\]
conics to extend $\D^1$ in this case. 
\end{itemize}

 Finally, as there were two choices for $\D^i$, $i \neq 1$, there are a total of 
\[
2\cdot630+600=1860
\]
 conics that extend $\D^1$ to a proper Mathon arc satisfying the desired properties.  Of course, there were three choices for $\D^1$ and so we get
\[
3\cdot1860=5580
\]
conics that will extend some $\D^i$ to a degree $8$ maximal arc of Mathon type.  However, due to Lemma \ref{SingerCount} we know that two out of the three isomorphism classes of degree $4$ maximal arcs can be extended to a unique Singer $8$-arc.  This means that $8$ conics will extend some $\D^i$ to a Singer $8$-arc which implies that there are actually
\[
5580-8=5572
\]
conics that will extend some $\D^i$ to a normal $8$-arc.  Since the four conics disjoint from $\D^i$ in such a normal $8$-arc all give rise to the same $8$-arc there are $1393$ normal $8$-arcs that contain some degree $4$ maximal arc $\D^i$.  Hence, the number of non-isomorphic normal $8$-arcs is
\[
1393/7=199.
\]
The fact that we divide by $7$ is a consequence of Corollary 1 and Lemma 7 in \cite{geometricapproach}, which states that there is a unique isomorphism of the plane mapping a degree $4$ Denniston arc in a normal Mathon $8$-arc onto one of $\D^i$.  
\qed \end{proof}

The total number of non-isomorphic degree $8$ maximal arcs of Mathon type is now easily calculated and yields the following theorem.

\begin{theorem}  \label{Total8Count}
The number of non-isomorphic degree $8$ maximal arcs of proper Mathon type in $\PG(2,2^7)$ is equal to $201$, two of which are Singer $8$-arcs, and $199$ of which are normal $8$-arcs.
\end{theorem}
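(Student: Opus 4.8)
The plan is to treat Theorem \ref{Total8Count} as an immediate bookkeeping consequence of Lemma \ref{SingerCount} and Lemma \ref{NormalCount}, once one records the dichotomy already established in the Remark following Theorem \ref{8count}. First I would recall that in $\PG(2,2^7)$ every degree $8$ maximal arc $\K$ of proper Mathon type has full automorphism group of order $2$ or $14$: a non-trivial automorphism $\phi$ of $\K$ either stabilizes one of the seven degree $4$ Denniston subarcs, in which case $\phi$ is the involution $\iota$ of Lemma \ref{iota} and $\vert\Aut(\K)\vert=2$, or it permutes these seven subarcs in a single $7$-cycle (the only other possible orbit length), so that $7$ divides $\vert\langle\phi\rangle\vert$, and then $\vert\Aut(\K)_{\D}\vert=2$ forces $\vert\Aut(\K)\vert=14$. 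Consequently the non-isomorphic proper Mathon $8$-arcs of $\PG(2,2^7)$ fall into exactly two classes, the normal $8$-arcs with $\Aut(\K)\cong C_2$ and the Singer $8$-arcs with $\vert\Aut(\K)\vert=14$, and these classes are disjoint because $2\neq 14$.

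It then remains only to add the two numbers. By Lemma \ref{SingerCount} there are, up to isomorphism, exactly $2$ Singer $8$-arcs, and by Lemma \ref{NormalCount} exactly $199$ normal $8$-arcs, whence the number of non-isomorphic degree $8$ maximal arcs of proper Mathon type in $\PG(2,2^7)$ equals $2+199=201$, with the stated split.

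There is essentially no obstacle here; the only points requiring a word of care are that the two classes genuinely do not overlap, which is clear from the automorphism-group orders above, and that the two arcs counted in Lemma \ref{SingerCount} really are of proper (non-Denniston) type. The latter was already secured earlier, where the $t$-values giving rise to these Singer $8$-arcs were seen to be distinct from the Denniston values $t=0$ and $t=\sqrt{w}^{-\sigma}$ identified in Remark \ref{Denniston}; the Denniston $8$-arc admitting a sharply transitive group on its conics (also called a Singer $8$-arc by the abuse of terminology fixed earlier) is simply not a proper Mathon arc and hence contributes to neither count, so it does not affect the total.
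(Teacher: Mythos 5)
Your proposal is correct and follows essentially the same route as the paper: the theorem is an immediate sum of Lemma \ref{SingerCount} and Lemma \ref{NormalCount}, with the dichotomy $\vert\Aut(\K)\vert\in\{2,14\}$ from the remark after Theorem \ref{8count} guaranteeing that every proper Mathon $8$-arc in $\PG(2,2^7)$ is counted exactly once, as normal or Singer. Your extra remarks on the disjointness of the two classes and on the Denniston ``Singer'' arcs not contributing are sound and consistent with the paper's conventions.
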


\section{Bigger fields}  \label{equations}

It turns out that  Singer $8$-arcs also exist in bigger fields.  

Consider $\GF(2^h)$, with $h$ odd and $7 \> \vert \> h$.  Let $\mathrm{TR}$ denote the absolute trace map from the finite field $\GF(2^h)$ onto $\GF(2)$ and let $\mathrm{tr}$ be the  absolute trace map from the field $\GF(2^7)$ onto $\GF(2)$.  Now, since $h$ is odd and a multiple of $7$ the equality $\mathrm{TR}(\alpha)=\mathrm{tr}(\alpha)$ will hold for every $\alpha \in \GF(2^7)$, subfield of $\GF(2^h)$.  This implies that all conics from (\ref{SingerEq}), as well with exponents (\ref{Singer1}) as with exponents (\ref{Singer2}), are exterior to the line $z=0$. However, one has to be careful with the definition of Singer arc over these bigger fields. To see this,  consider first the case where $l\neq7k$ for some (odd) $k$. In this case consider the smallest positive $t$ such that $lt\equiv1\pmod{7}$. Then the field automorphism $\tau=2^{lt}$ acts as squaring on the subfield $\GF(2^7)$ and has order $7$ (over the big field). If we now replace the automorphism $\sigma=2$ from Section 3 by $\tau$, then we can easily see that we produce two Mathon $8$-arcs that admit a cyclic group of order $7$ acting sharply transitively on the $7$ conics of the arc. These are clearly Singer $8$-arcs in the obvious sense.

If however we consider the case where $l=7k$ for some (odd) $k$, then there is no field automorphism of $\GF(2^l)$ that acts as squaring on the subfield $\GF(2^7)$ and has order $7$ (over the big field). In this case every field automorphism that acts as squaring on the subfield $\GF(2^7)$ will have as order a proper power of $7$. In this case the arcs we produce will only admit a cyclic group acting transitively on the $7$ conics of the arc, but not sharply transitively, as the elements of the unique cyclic subgroup of order seven will stabilize all conics of the arc (but not fix them pointwise). One could define such arcs as {\it Singer $8$-arcs of the second kind}.

\section{Open questions}

The following two questions seem now to be natural.

\begin{itemize}

\item Can we construct Singer arcs of degree bigger than $8$, that is, are there for example degree $16$ arcs that admit a (cyclic) automorphism group acting sharply transitively on their conics? If so, over which fields do these arcs exist? What about Singer arcs of the second kind?

\item Proper $8$-arcs of Mathon type define (considered conicwise)  the Fano-plane. Furthermore, the Singer group of the Mathon Singer $8$-arcs acts as a Singer group on this Fano-plane. This Singer group is only a subgroup of the full automorphism group of the Fano-plane. Are there fields over which there exist Mathon $8$-arcs that admit the full automorphism group of the Fano-plane (in its natural action on the conics of the arc)? If not, what is the largest subgroup of $\mathrm{GL}_3(2)$ that can occur, and over which fields does this happen? In such case one would of course not require that if a conic is stabilized by an automorphism it is fixed pointwise.

\end{itemize}

\begin{acknowledgements} 

The authors would like to thank the Mathematics department of UC San Diego, where part of this paper was written, and in particular Jacques Verstraete, for the warm hospitality. 

\end{acknowledgements} 
\bibliographystyle{plain}

\end{document}